\newtheorem{theo}{Theorem}[section]
\newtheorem{coll}[theo]{Corollary}
\newtheorem{lemm}[theo]{Lemma}
\newtheorem{prop}[theo]{Proposition}
\newtheorem{defn}[theo]{Definition}
\newtheorem{ex}[theo]{Example}
\newtheorem{rem}[theo]{Remark}
\newcommand{\Hom}{{\rm Hom}}
\newcommand{\End}{{\rm End}}
\newcommand{\A}{\mathcal A}
\newcommand{\C}{\mathcal C}
\begin{document}
\sloppy

\title[CS-Rickart and dual CS-Rickart objects]{CS-Rickart and dual CS-Rickart objects \\ in abelian categories}

\author[S. Crivei]{Septimiu Crivei}

\address{Faculty of Mathematics and Computer Science, Babe\c s-Bolyai University, Str. M. Kog\u alniceanu 1,
400084 Cluj-Napoca, Romania} \email{crivei@math.ubbcluj.ro}

\author[S.M. Radu]{Simona Maria Radu}

\address{Faculty of Mathematics and Computer Science, Babe\c s-Bolyai University, Str. M. Kog\u alniceanu 1,
400084 Cluj-Napoca, Romania} \email{simonamariar@math.ubbcluj.ro}

\subjclass[2020]{18E10, 16D90, 16T15}

\keywords{Abelian category, Grothendieck category, (dual) CS-Rickart object, (dual) Rickart object, 
extending object, lifting object, module, comodule.}

\begin{abstract} We introduce (dual) relative CS-Rickart objects in abelian categories,
as common generalizations of (dual) relative Rickart objects and extending (lifting) objects. 
We study direct summands and (co)products of (dual) relative CS-Rickart objects as well as 
classes all of whose objects are (dual) self-CS-Rickart. Applications are given to Grothendieck categories
and, in particular, to module and comodule categories.
\end{abstract}

\date{September 1, 2021}

\maketitle

\section{Introduction}

A basic module-theoretic result states that a module $M$ is semisimple if and only if every submodule of $M$ is a direct summand. One may obtain various generalizations of semisimplicity by considering only some submodules of a given module to be direct summands. For instance, if $M$ is the right $R$-module $R$, then $R$ is (von Neumann) regular if and only if every finitely generated submodule of $M$ (i.e., right ideal of $R$) is a direct summand. Given a module $M$, one may also define an extension of semisimplicity by using certain submodules related to the endomorphisms of $M$.  As such, a module $M$ is called Rickart if the kernel of every endomorphism of $M$ is a direct summand of $M$ \cite{LRR10}. Using a different approach, one may consider another generalization of semisimplicity, namely: a module $M$ is called extending (or CS-module) if every submodule of $M$ is essential in a direct summand \cite{DHSW}. A natural question is what happens when one restricts the definition of an extending module $M$ only to some submodules related to the endomorphisms of $M$ in the same style in which one obtains the concept of Rickart module from that of semisimple module? This is the way to obtain the so-called CS-Rickart modules, which are defined as modules $M$ such that the kernel of every endomorphism of $M$ is a direct summand of $M$ \cite{AN1}. We point out that  CS-Rickart modules may be viewed as playing the role of extending modules in the world of Rickart modules. The properties of CS-Rickart modules are sometimes similar to those of Rickart modules, but one needs different techniques to obtain them, in the same way as one uses different approaches to study extending modules in contrast to semisimple modules.

Rickart and dual Rickart objects in abelian categories have been introduced and studied by 
Crivei, K\"or and Olteanu \cite{CK,CO}. On one hand, they generalize regular objects in abelian categories in 
the sense of D\u asc\u alescu, N\u ast\u asescu, Tudorache and D\u au\c s \cite{DNTD}. 
Thus, an object is regular if and only if it is both Rickart and dual Rickart.
The main interest in their study stems from the work of von Neumann \cite{vN} 
on regular rings and Fieldhouse \cite{Field} and Zelmanowitz \cite{Zel} on certain concepts of regular modules. 
The main idea of the approach from \cite{CK} was to split the study of regular objects into two directions,
one of Rickart objects and the other one of dual Rickart objects. Since they are dual concepts, 
it is enough to study one of them followed by the use of the duality principle in abelian categories.

On the other hand, Rickart and dual Rickart objects generalize to abelian categories  
Rickart and dual Rickart modules in the sense of Lee, Rizvi and Roman \cite{LRR10,LRR11}, 
and in particular, Baer and dual Baer modules studied by Rizvi and Roman \cite{RR04,RR09} 
and Keskin T\"ut\"unc\"u and Tribak \cite{KT} respectively. The origin of (dual) Baer modules and (dual) Rickart modules  
can be found in the work of Kaplansky \cite{K} on Baer rings and Maeda \cite{Maeda} on Rickart rings respectively.
Examples of Baer rings include von Neumann regular right self-injective rings, von Neumann algebras, 
endomorphism rings of semisimple modules, while examples of Rickart rings include Baer rings, von Neumann regular rings, 
right hereditary rings, endomorphism rings of arbitrary direct sums of copies of a right hereditary ring. 

The general theory of (dual) Rickart objects in abelian categories may be efficiently applied both in the study of regular objects and in the study of (dual) Baer objects in abelian categories, as underlined in \cite{CK,CO}. Moreover, the framework of abelian categories allows an extensive use of the duality principle in order to automatically obtain dual results, and to investigate in a unified way concepts 
that have been independently studied in the literature. In subsidiary, one also has consequences in particular abelian categories, other than module categories.

Going back to module theory, the study of extending modules (also called CS-modules) 
and lifting modules has been a fruitful field of research for the last decades,
due to their important applications to ring and module theory.  
Examples of extending modules include uniform modules and injective modules,
while examples of lifting modules include hollow modules and projective modules over perfect rings.
The reader is referred to the monographs \cite{CLVW,DHSW} for further information on extending and lifting modules.
Recently, Abyzov, Nhan and Quynh have introduced and studied the concepts of CS-Rickart and dual CS-Rickart modules 
\cite{AN1,ANQ}, while Tribak has considered dual CS-Rickart modules over Dedekind domains \cite{Tribak}.  
Note that CS-Rickart modules represent a common generalization of Rickart modules and extending modules, 
while dual CS-Rickart modules represent a common generalization of  
dual Rickart modules and lifting modules. 

Motivated by all the above, we introduce and study (dual) CS-Rickart objects in abelian categories. 
To this end, we shall take advantage of some techniques used for (dual) Rickart objects, and we shall develop some new ones, inspired by the theory of extending and lifting modules. We depict in the following diagram the main concepts of the paper and the way in which they relate to each other. 

\begin{small}
$$\SelectTips{cm}{} 
\xymatrix{
 & *+[F-:<3pt>]{extending}  \ar[r] & *+[F-:<3pt>]{CS\textrm{-}Rickart} \ar[dr] &  \\
 & *+[F-:<3pt>]{Rickart} \ar[ur] \ar[r] & *+[F-:<3pt>]{SIP} \ar[r] & *+[F-:<3pt>]{SIP\textrm{-}extending} \\ 
*+[F-:<3pt>]{regular} \ar[ur] \ar[dr] & &  & \\
 & *+[F-:<3pt>]{dual \ Rickart} \ar[dr] \ar[r] & *+[F-:<3pt>]{SSP} \ar[r] & *+[F-:<3pt>]{SSP\textrm{-}lifting} \\
 & *+[F-:<3pt>]{lifting}  \ar[r] & *+[F-:<3pt>]{dual \ CS\textrm{-}Rickart} \ar[ur] &  \\
}$$
\end{small}

\vspace{2mm}

In what follows we briefly present the main results of the paper, by referring only to the case of CS-Rickart objects. 
The proofs of our results will also be presented only for CS-Rickart objects, 
the dual ones following by the duality principle in abelian categories.
In Section 2 we introduce relative CS-Rickart objects and self-CS-Rickart objects, and illustrate and delimit our concepts. 
Also, we discuss their relationship with Rickart objects in terms of some nonsingularity condition. In this direction, it turns out that self-Rickart objects are precisely the $\mathcal{K}$-nonsingular self-CS-Rickart objects.
Section 3 shows that the class of CS-Rickart objects is well behaved with respect to direct summands. It is known that self-Rickart objects have the summand intersection property (SIP). We prove that every self-CS-Rickart object has SIP-extending, a suitable property on direct summands which generalizes SIP. 
In Section 4 we show that if $M$ and $N_1,\dots,N_n$ are objects of an abelian category $\mathcal{A}$, 
then $\bigoplus_{i=1}^n N_i$ is $M$-CS-Rickart if and only if $N_i$ is $M$-CS-Rickart for every $i\in \{1,\dots,n\}$.
In general, coproducts of two self-CS-Rickart objects need not be self-CS-Rickart, but we have the following result. 
If $M=\bigoplus_{i\in I}M_i$ is a direct sum decomposition in an abelian category $\mathcal{A}$ 
such that $\Hom_{\mathcal{A}}(M_i,M_j)=0$ for every $i,j\in I$ with $i\neq j$, 
then $M$ is self-CS-Rickart if and only if $M_i$ is self-CS-Rickart for each $i\in I$.
Section 5 deals with classes all of whose objects are self-CS-Rickart. For instance, 
for an abelian category $\A$ with enough injectives, and a class $\mathcal{C}$ of objects of $\A$
which is closed under binary direct sums and contains all injective objects of $\A$, we prove that every object of $\C$ is extending
if and only if every object of $\C$ is self-CS-Rickart. We also deduce characterizations of 
right perfect and weakly (semi)hereditary rings in terms of self-CS-Rickart or dual self-CS-Rickart properties. 

The behaviour of CS-Rickart properties via functors between abelian categories is studied in \cite{CR2}. 
One may also consider a specialization of the concept of (dual) CS-Rickart object in an abelian category,  when considering fully invariant sections (fully coinvariant retractions) in the definition of a (dual) CS-Rickart object (see \cite{CR3}).

\section{(Dual) relative CS-Rickart objects}

Let $\mathcal{A}$ be an abelian category. For every morphism $f:M\to N$ in $\mathcal{A}$ we denote by
${\rm ker}(f):{\rm Ker}(f)\to M$, ${\rm coker}(f):N\to {\rm Coker}(f)$, 
${\rm coim}(f):M\to {\rm Coim}(f)$ and ${\rm im}(f):{\rm Im}(f)\to N$ the kernel, the cokernel, 
the coimage and the image of $f$ respectively. Note that ${\rm Coim}(f)\cong {\rm Im}(f)$, because $\A$ is abelian. 
For a short exact sequence $0\to A\to B\to C\to 0$ in $\A$, we also write $C=B/A$.
A morphism $f:A\to B$ is called a \emph{section} if there is a morphism
$f':B\to A$ such that $f'f=1_A$, and a \emph{retraction} if there is a morphism $f':B\to A$ such that $ff'=1_B$.
We recall a series of other necessary concepts. 

\begin{defn}[{\cite[Definition~2.2]{CK}}] \rm Let $M$ and $N$ be objects of an abelian category $\mathcal{A}$. Then $N$ is called:
\begin{enumerate}
\item \emph{$M$-Rickart} if the kernel of every morphism $f:M\to N$ is a section, or equivalently, 
the coimage of every morphism $f:M\to N$ is a retraction.
\item \emph{dual $M$-Rickart} if the cokernel of every morphism $f:M\to N$ is a retraction, or equivalently, 
the image of every morphism $f:M\to N$ is a section.
\item \emph{self-Rickart} if $N$ is $N$-Rickart.
\item \emph{dual self-Rickart} if $N$ is dual $N$-Rickart.
\end {enumerate}
\end{defn}

Since we work in abelian categories as in \cite{CK,DNTD}, we keep their terminology, in contrast to that of \cite{LRR10,LRR11}, which defines a module $M$ to be $N$-Rickart if the
kernel of every homomorphism $f:M\to N$ is a direct summand of $M$, and dual $N$-Rickart if the image of every homomorphism $f:M\to N$ is a direct summand of $N$. Let us also bear in mind that (dual) self-Rickart modules are simply called (dual) Rickart in \cite{LRR10,LRR11}. 

Essential monomorphisms and superfluous epimorphisms are well-known notions in categories, 
and they are defined as follows.

\begin{defn}\rm Let $\mathcal{A}$ be an abelian category. 
\begin{enumerate}
\item \rm A monomorphism $f:M \to N$ in $\mathcal{A}$ is called \textit{essential} 
if for every morphism $h:N \to P$ in $\mathcal{A}$ such that $hf$ is a monomorphism, $h$ is a monomorphism.     
Equivalently, a monomorphism $f:M\to N$ in $\mathcal{A}$ is essential if and only if 
${\rm Im}(f)$ is an essential subobject of $N$, in the sense that for any subobject $X$ of $M$, ${\rm Im}(f)\cap X=0$ implies $X=0$. 

An object is called \emph{uniform} if every non-zero subobject is essential.
\item \rm An epimorphism $f:M \to N$ in $\mathcal{A}$ is called \textit{superfluous} 
if for every morphism $h:P \to M$ in $\mathcal{A}$ such that $fh$ is an epimorphism, $h$ is an epimorphism. 
Equivalently, an epimorphism $f:M\to N$ in $\mathcal{A}$ is superfluous if and only if 
${\rm Ker}(f)$ is a superfluous subobject of $M$, in the sense that for any subobject $X$ of $M$, ${\rm Ker}(f)+X=M$ implies $X=M$. 

An object is called \emph{hollow} if every proper subobject is superfluous.
\end{enumerate}             
\end{defn}

The concepts of extending modules (also called CS-modules) and lifting modules \cite{CLVW,DHSW} 
are naturally generalized to abelian categories.

\begin{defn} \rm An object $M$ of an abelian category $\mathcal{A}$ is called:
\begin{enumerate}
\item \emph{extending} if every subobject of $M$ is essential in a direct summand of $M$.
\item \emph{lifting} if every subobject $L$ of $M$ lies above a direct summand of $M$, 
in the sense that $L$ contains a direct summand $K$ of $M$ such that $L/K$ is superfluous in $M/K$. 
\end{enumerate}
\end{defn}

Now we introduce the main concepts of the paper, which generalize both (dual) relative Rickart objects 
and extending (lifting) objects in abelian categories.

\begin{defn} \rm Let $M$ and $N$ be objects of an abelian category $\mathcal{A}$. Then $N$ is called:
\begin{enumerate}
\item \rm {\textit{$M$-CS-Rickart}} if for every morphism $f:M\to N$ there are an 
essential monomorphism $e:{\rm Ker}(f) \to L$ and a section $s:L \to M$ in $\mathcal{A}$ such that ${\rm ker}(f)=se$.
Equivalently, $N$ is $M$-CS-Rickart if and only if for every morphism $f:M\to N$, 
${\rm Ker}(f)$ is essential in a direct summand of $M$.
\item \rm{\textit{dual $M$-CS-Rickart}} if for every morphism $f:M\to N$ there are a    
retraction $r:N \to P$ and a superfluous epimorphism $t:P\to{\rm Coker}(f)$ in $\mathcal{A}$ such that ${\rm coker}(f)=tr$.
Equivalently, $N$ is dual $M$-CS-Rickart if and only if for every morphism $f:M\to N$, 
${\rm Im}(f)$ lies above a direct summand of $N$.
\item \rm {\textit{self-CS-Rickart}} if $N$ is $N$-CS-Rickart.
\item \rm  {\textit{dual self-CS-Rickart}} if $N$ is dual $N$-CS-Rickart.
\end{enumerate}
\end{defn}

\begin{rem} \rm (1) It is clear from the definitions that every (dual) self-Rickart object and 
every extending (lifting) object of an abelian category $\A$ is (dual) self-CS-Rickart. An object $M$ of $\A$ is extending if and only if every object $N$ of $\A$ is $M$-CS-Rickart, while an object $N$ of $\A$ is lifting if and only if $N$ is dual $M$-CS-Rickart for every object $M$ of $\A$. Also, an object $N$ of $\A$ is $M$-CS-Rickart for every uniform object $M$ of $\A$, while any hollow object $N$ of $\A$ is dual $M$-CS-Rickart for every object $M$ of $\A$. 

(2) It is well known that the objects of an abelian category need not have injective envelopes or projective covers. Nevertheless, certain (dual) relative CS-Rickart objects of abelian categories give rise to such envelopes and covers as follows. Let $M$ and $N$ be objects of an abelian category $\A$. 

If $M$ is injective and $N$ is $M$-CS-Rickart, then the kernel of any morphism $f:M\to N$ has an injective envelope (i.e., an essential monomorphism ${\rm Ker}(f)\to L$ with $L$ injective). Indeed, the existence of a section $s:L\to M$ and an essential monomorphism $e:{\rm Ker}(f) \to L$ such that ${\rm ker}(f)=se$ implies that $L$ is injective, and consequently $L$ is an injective envelope of ${\rm Ker}(f)$. In particular, the kernel of every endomorphism of an injective self-CS-Rickart object has an injective envelope. 

Dually, if $N$ is projective and dual $M$-CS-Rickart, then the cokernel of any homomorphism $f:M\to N$ has a projective cover (i.e., a superfluous epimorphism $P\to {\rm Coker}(f)$ with $P$ projective). In particular, the cokernel of every endomorphism of a projective dual self-CS-Rickart object has a projective cover.
\end{rem}

\begin{ex} \label{ex1} \rm We first give a list of examples related to (dual) self-CS-Rickart objects in the category 
of abelian groups ($\mathbb{Z}$-modules).

(i) $\mathbb{Z}_4=\mathbb{Z}/4\mathbb{Z}$ is both self-CS-Rickart and dual self-CS-Rickart, 
but it is neither self-Rickart, nor dual self-Rickart. 
Indeed, $\mathbb{Z}_4$ is clearly extending and lifting \cite[Corollary~22.4]{CLVW}, 
and thus $\mathbb{Z}_4$ is self-CS-Rickart and dual self-CS-Rickart.
Considering $f\in \End_\mathbb{Z}(\mathbb{Z}_4)$ defined by $f(x+4\mathbb{Z})=2x+4\mathbb{Z}$, 
note that ${\rm Ker}(f)={\rm Im}(f)=2\mathbb{Z}_4$ is not a direct summand of $\mathbb{Z}_4$. 
Hence $\mathbb{Z}_4$ is neither self-Rickart, nor dual self-Rickart.

(ii) $\mathbb{Z}$ is self-CS-Rickart, but not dual self-CS-Rickart. 
Indeed, $\mathbb{Z}$ is self-Rickart, and thus it is self-CS-Rickart. 
Considering $f\in \End_\mathbb{Z}(\mathbb{Z})$ defined by $f(x)=2x$, ${\rm Im}(f)=2\mathbb{Z}$ 
is not superfluous in the indecomposable abelian group $\mathbb{Z}$. Hence 
${\rm Im}(f)$ does not lie above a direct summand of $\mathbb{Z}$, and thus 
$\mathbb{Z}$ is not dual self-CS-Rickart. 
           
(iii) $\mathbb{Z}_4$ is both $\mathbb{Z}$-CS-Rickart and dual $\mathbb{Z}$-CS-Rickart, 
but neither $\mathbb{Z}$-Rickart nor dual $\mathbb{Z}$-Rickart.
Indeed, for every $f\in \Hom_\mathbb{Z}(\mathbb{Z}, \mathbb{Z}_4)$, 
${\rm Ker}(f)\in \{\mathbb{Z}, 2\mathbb{Z}, 4\mathbb{Z}\}$ is essential in $\mathbb{Z}$, because $\mathbb{Z}$ is uniform. 
Hence $\mathbb{Z}_4$ is $\mathbb{Z}$-CS-Rickart. On the other hand, for every $f\in \Hom_\mathbb{Z}(\mathbb{Z}, \mathbb{Z}_4)$, 
${\rm Im}(f)\in\{0, 2\mathbb{Z}_4, \mathbb{Z}_4\}$ and $\mathbb{Z}_4$ is hollow.  
Hence $\mathbb{Z}_4$ is dual $\mathbb{Z}$-CS-Rickart. Also, for $f\in \Hom_\mathbb{Z}(\mathbb{Z},\mathbb{Z}_4)$ 
defined by $f(x)=2x+4\mathbb{Z}$, ${\rm Ker}(f)=2\mathbb{Z}$ is not a direct summand of $\mathbb{Z}$ and   
${\rm Im}(f)=2\mathbb{Z}_4$ is not a direct sumand of $\mathbb{Z}_4$. Hence $\mathbb{Z}_4$ is neither $\mathbb{Z}$-Rickart,
nor dual $\mathbb{Z}$-Rickart.

(iv) $\mathbb{Z}\oplus \mathbb{Z}_p$ (for some prime $p$) is self-CS-Rickart, but not extending \cite[Example~1]{AN1}. Also, $\mathbb{Z}^{(\mathbb{R})}$ is self-Rickart, and thus self-CS-Rickart, but not extending \cite[Remark~2.28]{LRR10}.
                
(v) $\mathbb{Q}$ is dual self-CS-Rickart, but not lifting \cite[Example~2.14]{Tribak}.                
\end{ex}

\begin{ex} \label{ex2} \rm Consider the ring $R=\begin{pmatrix}\mathbb{Z}&\mathbb{Z}\\0&\mathbb{Z} \end{pmatrix}$. 
The direct summands of the right $R$-module $R$ are the following:
$$\begin{pmatrix} 0&0\\0&0 \end{pmatrix}, \begin{pmatrix}\mathbb{Z}&\mathbb{Z}\\0&\mathbb{Z} \end{pmatrix},
\begin{pmatrix}\mathbb{Z}&\mathbb{Z}\\0&0 \end{pmatrix}, \begin{pmatrix}0&n\\0&1 \end{pmatrix}\mathbb{Z} \quad (n\in \mathbb{Z}).$$
Denote $a=\begin{pmatrix}2&1\\0&0 \end{pmatrix}$ and let $t_a:R\to R$ be defined by $t_a(b)=ab$.  
Then ${\rm Ker}(t_a)=\begin{pmatrix}0&1\\0&-2 \end{pmatrix}\mathbb{Z}$. 
Since $R$ is the only direct summand of $R$ which contains ${\rm Ker}(t_a)$, and ${\rm Ker}(t_a)$ is not essential in $R$,
it follows that $R$ is not a self-CS-Rickart right $R$-module. 
Also, $\begin{pmatrix} 0&0\\0&0 \end{pmatrix}$ is the only direct summand of $R$ contained in 
${\rm Im}(t_a)=\begin{pmatrix}2\mathbb{Z}&\mathbb{Z}\\0&0 \end{pmatrix}$, 
and ${\rm Im}(t_a)$ is not superfluous in $R$. Hence ${\rm Im}(t_a)$ does not lie above a direct summand of $R$,
and thus $R$ is not a dual self-CS-Rickart right $R$-module.
\end{ex}

\begin{ex} \label{ex3} \rm Let $K$ be a field and consider the ring $R=\begin{pmatrix}K&K[X]\\0&K[X] \end{pmatrix}$. 
The direct summands of the right $R$-module $R$ are generated by the idempotents of ${\rm End}_R(R)\cong R$. 
The idempotents of $R$ are the following: 
$$\begin{pmatrix}0&0\\0&0 \end{pmatrix}, \begin{pmatrix}1&0\\0&1 \end{pmatrix}, 
\begin{pmatrix}1&f\\0&0 \end{pmatrix}, \begin{pmatrix}0&f\\0&1 \end{pmatrix} \quad (f\in K[X]).$$
Hence the direct summands of the right $R$-module $R$ are the following: 
$$\begin{pmatrix}0&0\\0&0 \end{pmatrix}, \begin{pmatrix}K&K[X]\\0&K[X] \end{pmatrix}, 
\begin{pmatrix}K&K[X]\\0&0 \end{pmatrix}, \begin{pmatrix}0&f\\0&1 \end{pmatrix}K[X] \quad (f\in K[X]).$$
Every endomorphism of the right $R$-module $R$ is of the form $t_a:R\to R$ given by $t_a(b)=ab$ 
for some $a=\begin{pmatrix}k&f\\0&g \end{pmatrix}\in R$. We distinguish the following cases:

Case 1. If $k\neq 0$ and $g\neq 0$, then ${\rm Ker}(t_a)=\begin{pmatrix}0&0\\0&0 \end{pmatrix}$. 

Case 2. If $k\neq 0$ and $g=0$, then ${\rm Ker}(t_a)=\begin{pmatrix}0&f\\0&1 \end{pmatrix}K[X]$ for some $f\in K[X]$. 

Case 3. If $k=0$ and at least one of $f$ and $g$ is non-zero, then ${\rm Ker}(t_a)=\begin{pmatrix}K&K[X]\\0&0 \end{pmatrix}$.

Case 4. If $k=0$, $f=0$ and $g=0$, then ${\rm Ker}(t_a)=R$.

In any case, ${\rm Ker}(t_a)$ is a direct summand of $R$. Hence $R$ is a self-Rickart right $R$-module,
and thus it is a self-CS-Rickart right $R$-module. 

Now let $a=\begin{pmatrix}0&0\\0&g \end{pmatrix}\in R$ with $g\notin K$. 
Then $\begin{pmatrix}0&0\\0&0 \end{pmatrix}$ is the only direct summand of $R$ contained in 
${\rm Im}(t_a)=\begin{pmatrix}0&0\\ 0&gK[X] \end{pmatrix}$, and ${\rm Im}(t_a)$ is not superfluous in $R$. 
Hence ${\rm Im}(t_a)$ does not lie above a direct summand of $R$,
and thus $R$ is not a dual self-CS-Rickart right $R$-module.
\end{ex}

In order to further relate (dual) relative Rickart and (dual) relative CS-Rickart objects we recall the following concepts.

\begin{defn}[{\cite[Definition~9.4]{CK}}] \rm Let $M$ and $N$ be objects of an abelian category $\mathcal{A}$. Then:
\begin{enumerate}
\item $N$ is called \emph{$M$-$\mathcal{K}$-nonsingular} if for any morphism $f:M\to N$ in $\mathcal{A}$, 
${\rm Ker}(f)$ essential in $M$ implies $f=0$. 
\item $M$ is called \emph{$N$-$\mathcal{T}$-nonsingular} if for any morphism $f:M\to N$ in $\mathcal{A}$, 
${\rm Im}(f)$ superfluous in $N$ implies $f=0$. 
\end{enumerate}
\end{defn}

The above notions are well behaved with respect to direct summands, as the following lemma shows.

\begin{lemm} \label{l:nonsing} Let $M$ and $N$ be objects of an abelian category $\mathcal{A}$, $M'$ a direct summand of $M$ 
and $N'$ a direct summand of $N$. 
\begin{enumerate}
\item If $N$ is $M$-$\mathcal{K}$-nonsingular, then $N'$ is $M'$-$\mathcal{K}$-nonsingular.
\item If $M$ is $N$-$\mathcal{T}$-nonsingular, then $M'$ is $N'$-$\mathcal{T}$-nonsingular
\end{enumerate}
\end{lemm}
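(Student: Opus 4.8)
The plan is to prove only (1), the statement (2) then following by the duality principle in abelian categories. So assume $N$ is $M$-$\mathcal{K}$-nonsingular. Since $M'$ is a direct summand of $M$, fix a section $u\colon M'\to M$ and a retraction $p\colon M\to M'$ with $pu=1_{M'}$, and put $M''={\rm Ker}(p)$; similarly fix a section $v\colon N'\to N$. Let $g\colon M'\to N'$ be a morphism with ${\rm Ker}(g)$ essential in $M'$; I must show $g=0$. First I would form the morphism $f=vgp\colon M\to N$ and identify its kernel. As $v$ is a monomorphism, ${\rm Ker}(f)={\rm Ker}(gp)$, and since $p$ is a split epimorphism with ${\rm Ker}(p)=M''$ and ${\rm Ker}(gp)=p^{-1}({\rm Ker}(g))$, the map $p$ restricts to an epimorphism ${\rm Ker}(gp)\to{\rm Ker}(g)$ with kernel $M''$, split by $u\,{\rm ker}(g)\colon{\rm Ker}(g)\to M$ (which lands in ${\rm Ker}(gp)$ because $gp\,u\,{\rm ker}(g)=g\,{\rm ker}(g)=0$). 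Hence there is a split short exact sequence $0\to M''\to{\rm Ker}(gp)\to{\rm Ker}(g)\to 0$; that is, ${\rm Ker}(f)={\rm Ker}(g)\oplus M''$ as a subobject of $M=M'\oplus M''$.

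The main point is then to check that ${\rm Ker}(g)\oplus M''$ is essential in $M'\oplus M''=M$, for which it suffices to establish the general fact that if $A$ is an essential subobject of $B$ in $\mathcal{A}$, then $A\oplus C$ is an essential subobject of $B\oplus C$ for every object $C$; we then apply it with $A={\rm Ker}(g)\le M'=B$ and $C=M''$. To prove this fact, let $q\colon B\oplus C\to B$ be the canonical projection, so that ${\rm Ker}(q)=C$ and $q^{-1}(A)=A\oplus C$, and let $X$ be a subobject of $B\oplus C$ with $X\cap(A\oplus C)=0$. Since ${\rm Ker}(q)=C\le A\oplus C$, we get $X\cap{\rm Ker}(q)=0$, so $q$ restricts to a monomorphism on $X$; hence any nonzero subobject of $q(X)\cap A$ would pull back along this monomorphism to a nonzero subobject of $X\cap q^{-1}(A)=X\cap(A\oplus C)=0$, which forces $q(X)\cap A=0$, then $q(X)=0$ by essentiality of $A$ in $B$, and finally $X=0$ since $q$ is monic on $X$. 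I expect this essentiality argument to be the only real obstacle, being the categorical analogue of a routine module-theoretic computation.

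Finally, ${\rm Ker}(f)$ being essential in $M$ and $N$ being $M$-$\mathcal{K}$-nonsingular yield $f=0$. Precomposing with $u$ and using $pu=1_{M'}$ gives $0=fu=vg$, and since $v$ is a monomorphism we conclude $g=0$. Thus $N'$ is $M'$-$\mathcal{K}$-nonsingular, and (2) follows by the duality principle.
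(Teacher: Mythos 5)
Your proof is correct and follows essentially the same route as the paper: extend $g\colon M'\to N'$ by zero to a morphism $M\to N$ (your $f=vgp$ is exactly the paper's $f\oplus 0$), identify its kernel as ${\rm Ker}(g)\oplus M''$, observe this is essential in $M=M'\oplus M''$, and invoke $M$-$\mathcal{K}$-nonsingularity. The only difference is that you spell out the fact that $A$ essential in $B$ implies $A\oplus C$ essential in $B\oplus C$, which the paper simply asserts; your verification of it is sound.
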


\begin{proof} (1) Assume that $N$ is $M$-$\mathcal{K}$-nonsingular. 
Write $M=M'\oplus M''$ and $N=N'\oplus N''$. Let $f:M'\to N'$ be a morphism such that 
${\rm Ker}(f)$ is essential in $M'$. Consider the morphism $g=f\oplus 0:M'\oplus M''\to N'\oplus N''$. 
Then ${\rm Ker}(g)={\rm Ker}(f)\oplus M''$ is essential in $M=M'\oplus M''$. 
Since $N$ is $M$-$\mathcal{K}$-nonsingular, we have $g=0$, which implies $f=0$. 
Hence $N'$ is $M'$-$\mathcal{K}$-nonsingular. 
\end{proof}

The following theorem generalizes \cite[Lemmas~6,7]{AN1}.

\begin{theo} \label{t:nonsing} Let $M$ and $N$ be objects of an abelian category $\mathcal{A}$. Then:
\begin{enumerate}
\item $N$ is $M$-CS-Rickart and $N$ is $M$-$\mathcal{K}$-nonsingular if and only if $N$ is $M$-Rickart.
\item $N$ is dual $M$-CS-Rickart and $M$ is $N$-$\mathcal{T}$-nonsingular if and only if $N$ is dual $M$-Rickart.
\end{enumerate}
\end{theo}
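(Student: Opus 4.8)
The plan is to prove only part (1), since part (2) follows by the duality principle in abelian categories (swapping kernels with cokernels, essential monomorphisms with superfluous epimorphisms, sections with retractions, and $\mathcal{K}$-nonsingularity with $\mathcal{T}$-nonsingularity). I would structure the argument as two implications.

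First, for the "if" direction, suppose $N$ is $M$-Rickart. Then for every morphism $f\colon M\to N$, $\mathrm{Ker}(f)$ is a direct summand of $M$; taking the essential monomorphism to be the identity $1_{\mathrm{Ker}(f)}$ shows immediately that $N$ is $M$-CS-Rickart. For $\mathcal{K}$-nonsingularity, suppose $f\colon M\to N$ has $\mathrm{Ker}(f)$ essential in $M$. Since $N$ is $M$-Rickart, $\mathrm{Ker}(f)$ is a direct summand of $M$; but an essential direct summand of $M$ must be all of $M$ (if $M=\mathrm{Ker}(f)\oplus X$ with $\mathrm{Ker}(f)$ essential, then $\mathrm{Ker}(f)\cap X=0$ forces $X=0$), so $\mathrm{ker}(f)$ is an isomorphism and hence $f=0$. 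Thus $N$ is $M$-$\mathcal{K}$-nonsingular.

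For the "only if" direction, assume $N$ is $M$-CS-Rickart and $M$-$\mathcal{K}$-nonsingular, and let $f\colon M\to N$ be any morphism. By $M$-CS-Rickartness there is a direct summand $K$ of $M$ and an essential monomorphism $e\colon \mathrm{Ker}(f)\to K$ with $\mathrm{ker}(f)=se$, where $s\colon K\to M$ is the canonical section. Write $M=K\oplus K'$ and let $p\colon M\to K$ be the associated retraction with $ps=1_K$. The key step is to pass to the restriction $f|_K = fs\colon K\to N$ and show that $\mathrm{Ker}(fs)$ is essential in $K$: indeed $e$ factors through $\mathrm{ker}(fs)$ (since $fse = f\,\mathrm{ker}(f) = 0$, so $e$ lifts uniquely to $e'\colon \mathrm{Ker}(f)\to\mathrm{Ker}(fs)$ with $\mathrm{ker}(fs)\,e' = e$), and one checks conversely that $s\,\mathrm{ker}(fs)$ factors through $\mathrm{ker}(f)$ (since $f\,s\,\mathrm{ker}(fs)=0$), which together with the monomorphism $s$ forces $\mathrm{Im}(\mathrm{ker}(fs))\cong \mathrm{Im}(e)$ inside $K$; hence $\mathrm{Ker}(fs)$ is essential in $K$ because $\mathrm{Ker}(f)$ is. Now I would invoke Lemma~\ref{l:nonsing}(1): since $N$ is $M$-$\mathcal{K}$-nonsingular and $K$ is a direct summand of $M$ (and $N$ is trivially a direct summand of itself), $N$ is $K$-$\mathcal{K}$-nonsingular, so $\mathrm{Ker}(fs)$ essential in $K$ gives $fs=0$. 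Therefore $\mathrm{Im}(s)\subseteq\mathrm{Ker}(f)$; combined with $\mathrm{Ker}(f)\subseteq\mathrm{Im}(s)=K$ (from $\mathrm{ker}(f)=se$), we get $\mathrm{Ker}(f)=K=\mathrm{Im}(s)$, a direct summand of $M$. Hence $\mathrm{ker}(f)$ is a section and $N$ is $M$-Rickart.

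The main obstacle is the middle bookkeeping in the "only if" direction: making precise, purely in terms of morphisms and subobjects rather than elements, that $\mathrm{Ker}(f)$ (viewed via $e$ as a subobject of $K$) coincides with $\mathrm{Ker}(fs)$ as a subobject of $K$, so that essentiality transfers correctly and $\mathcal{K}$-nonsingularity can be applied on $K$. This is a routine diagram chase using the universal property of kernels and the fact that $s$ is a monomorphism, but it must be written carefully to remain valid in a general abelian category. Once $fs=0$ is established, the conclusion is immediate.
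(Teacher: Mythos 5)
Your proposal is correct and follows essentially the same route as the paper: in the forward direction the paper also restricts $f$ to the direct summand $L$ containing $\mathrm{Ker}(f)$ essentially, identifies $\mathrm{Ker}(fi)$ with $\mathrm{Ker}(f)$, and applies Lemma~\ref{l:nonsing} to conclude $fi=0$ and hence $\mathrm{Ker}(f)=L$; the converse is handled identically. Your extra bookkeeping verifying $\mathrm{Ker}(fs)=\mathrm{Ker}(f)$ as subobjects of $K$ is exactly the step the paper states without proof, and it is sound.
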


\begin{proof} (1) Assume that $N$ is $M$-CS-Rickart and $N$ is $M$-$\mathcal{K}$-nonsingular. 
Let $f:M\to N$ be a morphism in $\A$. Since $N$ is $M$-CS-Rickart, there exists a direct summand $L$ of $M$ 
such that ${\rm Ker}(f)$ is essential in $L$. Let $i:L\to M$ be the canonical injection, 
and consider the morphism $fi:L\to N$. Since ${\rm Ker}(fi)={\rm Ker}(f)$ is essential in $L$ and 
$N$ is $L$-$\mathcal{K}$-nonsingular by Lemma \ref{l:nonsing}, we have $fi=0$. 
It follows that ${\rm Ker}(f)=L$. Hence ${\rm Ker}(f)$ is a direct summand of $M$, which shows that 
$N$ is $M$-Rickart.

Conversely, assume that $N$ is $M$-Rickart. Then $N$ is $M$-CS-Rickart. 
Now let $f:M\to N$ be a morphism in $\A$ such that ${\rm Ker}(f)$ is essential in $M$. 
But since $N$ is $M$-Rickart, ${\rm Ker}(f)$ is a direct summand of $M$. Hence ${\rm Ker}(f)=M$, and thus $f=0$.
Therefore, $N$ is $M$-$\mathcal{K}$-nonsingular. 
\end{proof}

Following \cite[8.1]{CLVW}, \cite[Section~4]{DHSW} and \cite{TV}, recall that a right $R$-module $M$ (for some unitary ring $R$) is called \emph{singular} if $M$ is isomorphic to $L/K$ for some module $L$ and essential submodule $K$ of $L$, and \emph{small} if $M$ is superfluous in some module $L$. Let $\mathcal{U}$ and $\mathcal{V}$ be the classes of singular right $R$-modules and small right $R$-modules respectively. For any right $R$-module $M$ let us denote by $Z(M)$ the trace of $\mathcal{U}$ in $M$ and by $\overline{Z}(M)$ the reject of $\mathcal{V}$ in $M$, that is, 
\begin{align*} Z(M)&={\rm Tr}(\mathcal{U},M)=\sum\{{\rm Im}(f)\mid f\in \Hom(U,M) \textrm{ for some } U\in \mathcal{U}\}, \\
\overline{Z}(M)&={\rm Re}(M,\mathcal{V})=\bigcap\{{\rm Ker}(f)\mid f\in \Hom(M,V) \textrm{ for some } V\in \mathcal{V}\}.
\end{align*}
Then $M$ is called \emph{non-singular} if $Z(M)=0$ and \emph{non-cosingular} if $\overline{Z}(M)=M$. Now it is clear that every non-singular right $R$-module $M$ is $M$-$\mathcal{K}$-nonsingular, while every non-cosingular right $R$-module $M$ is $M$-$\mathcal{T}$-nonsingular. Then Theorem \ref{t:nonsing} yields the following corollary. 

\begin{coll} \label{c:Z} Every non-singular self-CS-Rickart right $R$-module is self-Rickart and every non-cosingular dual self-CS-Rickart right $R$-module is dual self-Rickart.
\end{coll}

\section{Direct summands of (dual) relative CS-Rickart objects}

As in the case of (dual) relative Rickart objects and extending (lifting) objects, we see that
(dual) relative CS-Rickart objects are well behaved with respect to direct summands.

\begin{theo} \label{t:sdr1} Let $r:M\to M'$ be an epimorphism and $s:N'\to N$ a monomorphism in an abelian category $\mathcal{A}$. 
\begin{enumerate} 
\item If $r$ is a retraction and $N$ is $M$-CS-Rickart, then $N'$ is $M'$-CS-Rickart.
\item If $s$ is a section and $N$ is dual $M$-CS-Rickart, then $N'$ is dual $M'$-CS-Rickart.
\end{enumerate}
\end{theo}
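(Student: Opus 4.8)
The plan is to establish only statement (1); statement (2) then follows at once from the duality principle in abelian categories, since passing to $\mathcal{A}^{\mathrm{op}}$ interchanges epimorphisms with monomorphisms, retractions with sections, kernels with cokernels, essential monomorphisms with superfluous epimorphisms, and $M$-CS-Rickart with dual $M$-CS-Rickart. So assume $r:M\to M'$ is a retraction, $N$ is $M$-CS-Rickart, and $s:N'\to N$ is the given monomorphism, and fix an arbitrary morphism $g:M'\to N'$; we must produce a direct summand of $M'$ in which $\mathrm{Ker}(g)$ is essential.

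First I would lift $g$ to a morphism out of $M$ by setting $f=sgr:M\to N$. Because $s$ is a monomorphism, $\mathrm{Ker}(f)=\mathrm{Ker}(gr)$, and because $r$ is an epimorphism this coincides with the preimage $r^{-1}(\mathrm{Ker}(g))$; hence $\mathrm{Ker}(r)\le\mathrm{Ker}(f)$ and $r(\mathrm{Ker}(f))=\mathrm{Ker}(g)$ (using that for an epimorphism $r$ one has $r(r^{-1}(Y))=Y$). Since $N$ is $M$-CS-Rickart, there is a direct summand $L$ of $M$ such that $\mathrm{Ker}(f)$ is essential in $L$.

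Next I would use the splitting of $r$. As $r$ is a retraction, $\mathrm{Ker}(r)$ is a direct summand of $M$; since $\mathrm{Ker}(r)\le\mathrm{Ker}(f)\le L$ and $L$ is a direct summand of $M$, modularity of the subobject lattice shows $\mathrm{Ker}(r)$ is a direct summand of $L$, say $L=\mathrm{Ker}(r)\oplus L_0$, and likewise $\mathrm{Ker}(f)=\mathrm{Ker}(r)\oplus K_0$ with $K_0=\mathrm{Ker}(f)\cap L_0$. The delicate point is now to descend essentiality: from $\mathrm{Ker}(r)\oplus K_0$ essential in $\mathrm{Ker}(r)\oplus L_0$ one deduces that $K_0$ is essential in $L_0$, because for any subobject $X\le L_0$ with $X\cap K_0=0$, modularity gives $X\cap(\mathrm{Ker}(r)\oplus K_0)=X\cap K_0=0$, forcing $X=0$. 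I expect this to be the main obstacle, because essentiality does \emph{not} survive passage to arbitrary quotients and it must be argued without elements; the reason it works here is exactly that the summand split off, $\mathrm{Ker}(r)$, lies inside both $\mathrm{Ker}(f)$ and $L$.

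Finally I would transport the data back along $r$. Writing $M=L\oplus L^{c}$ we get $M=\mathrm{Ker}(r)\oplus(L_0\oplus L^{c})$, so the restriction of $r$ to $L_0\oplus L^{c}$ is a monomorphism with image $r(M)=M'$, hence an isomorphism $L_0\oplus L^{c}\xrightarrow{\ \sim\ }M'$. In particular $r(L_0)$ is a direct summand of $M'$ and, isomorphisms preserving essentiality, $r(K_0)$ is essential in $r(L_0)$. Since $r$ annihilates $\mathrm{Ker}(r)$, we have $r(K_0)=r(\mathrm{Ker}(f))=\mathrm{Ker}(g)$, so $\mathrm{Ker}(g)$ is essential in the direct summand $r(L_0)$ of $M'$; this is precisely the statement that $N'$ is $M'$-CS-Rickart. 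Apart from routine diagram chasing, the only facts used are modularity of subobject lattices, that a monomorphism is an isomorphism onto its image, and that an epimorphism carries the preimage of a subobject onto that subobject.
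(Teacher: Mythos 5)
Your proof is correct, but it takes a genuinely different route from the paper's. Both arguments start the same way: lift $g:M'\to N'$ to $f=sgr:M\to N$, note ${\rm Ker}(f)={\rm Ker}(gr)=r^{-1}({\rm Ker}(g))$, and invoke the hypothesis to place ${\rm Ker}(f)$ essentially inside a direct summand of $M$. Where you diverge is in the descent to $M'$: the paper stays entirely at the level of morphisms and exact diagrams, building a commutative diagram with exact columns, using \cite[Lemma~2.9]{CK} to recognize the relevant squares as pullbacks/pushouts (whence $v$ is a section and the columns split) and then Stenstr\"om's lemma \cite[Chapter~5, Lemma~7.1]{St} that pullbacks preserve essential monomorphisms, to produce the factorization ${\rm ker}(g)=vk$ with $k$ essential and $v$ a section. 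You instead argue in the (modular) subobject lattice: split off ${\rm Ker}(r)$ inside both ${\rm Ker}(f)$ and the summand $L$ (legitimate since ${\rm Ker}(r)$ is a summand of $M$ contained in both), show by the modular law that essentiality of ${\rm Ker}(r)\oplus K_0$ in ${\rm Ker}(r)\oplus L_0$ forces $K_0$ essential in $L_0$, and then transport along the isomorphism $r|_{L_0\oplus L^c}\colon L_0\oplus L^c\to M'$, using $r(r^{-1}(Y))=Y$ for the epimorphism $r$. All of these lattice and image/preimage facts are valid in abelian categories, and your handling of the delicate essentiality step is correct; this is exactly the point where the paper instead cites the pullback lemma. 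Your version is more elementary and closer to module-theoretic intuition, at the cost of some careful element-free lattice bookkeeping; the paper's version reuses machinery already set up for Rickart objects and delivers the conclusion directly in the form of the definition (essential monomorphism followed by a section). One small bookkeeping remark on your appeal to duality for (2): strictly, ``$N$ is $M$-CS-Rickart'' in $\A$ corresponds to ``$M$ is dual $N$-CS-Rickart'' in $\A^{\rm op}$ (the roles of the two objects swap, as do the split and non-split conditions on $r$ and $s$), and with that bookkeeping one does recover statement (2) exactly; this is also how the paper disposes of the dual statements.
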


\begin{proof}
(1) Let $f:M'\to N'$ be a morphism in $\mathcal{A}$. Consider the morphism $sfr: M\to N$. Let $L={\rm Ker}(fr)={\rm Ker}(sfr)$
and $l={\rm ker}(fr):L\to M$. Since $N$ is $M$-CS-Rickart, there exist an essential monomorphism 
$e:L\to U$ and a section $u:U\to M$ such that $l=ue$. Denote $X={\rm Ker}(r)$ and $K={\rm Ker}(f)$. 

Now we have the
following induced commutative diagram:
$$\SelectTips{cm}{}
\xymatrix{
 & 0 \ar[d] & 0 \ar[d] & 0\ar[d] & \\
 & X \ar@{=}[r] \ar[d]_i & X \ar@{=}[r] \ar[d]^{j} & X \ar[d]^{uj} \\ 
0 \ar[r] & L \ar[r]^-{e} \ar@{-->}[d]_p & U \ar[r]^-{u} \ar@{-->}[d]^-{q} & M \ar[r]^-{fr} \ar[d]^r & N' \ar@{=}[d] \ar[r]^s & N \ar@{=}[d] \\
0 \ar[r] & K \ar@{-->}[r]_-{k} \ar[d] & V \ar@{-->}[r]_-{v} \ar[d] & M' \ar[r]_f \ar[d] & N' \ar[r]_s & N \\
 & 0 & 0 & 0 &
}$$
with exact columns. Since the square $UMM'V$ is a pushout by \cite[Lemma~2.9]{CK}, 
it follows that $v$ is a section. Since the square $UMM'V$ and the rectangle $LMM'K$ are pullbacks by \cite[Lemma~2.9]{CK},
so is the square $LUVK$. It follows that $p$ and $q$ are retractions, and thus the first two vertical short exact sequences split. 
Then there exist $p':K\to L$ such that $pp'=1_K$ and $q':V\to U$ such that $qq'=1_V$. 
Then we have the following induced commutative diagram 
$$\SelectTips{cm}{}
\xymatrix{
0 \ar[r] & K \ar[r]^{p'} \ar[d]_k & L \ar[r] \ar[d]^e & X \ar[r] \ar@{=}[d] & 0 \\
0 \ar[r] & V \ar[r]_{q'} & U \ar[r] & X \ar[r] & 0
}
$$
with exact rows. Since the square $KLUV$ is a pullback by \cite[Lemma~2.9]{CK} and $e$ is an essential monomorphism,
it follows that $k$ is also an essential monomorphism by \cite[Chapter~5, Lemma~7.1]{St}.  
Hence ${\rm ker}(f)=vk$, where $k:K\to V$ is an essential monomorphism and $v:V\to M$ is a section. 
This shows that $N'$ is $M'$-CS-Rickart.
\end{proof}

The following consequence of Theorem \ref{t:sdr1} generalizes \cite[Lemma~1]{AN1} from the category of modules.

\begin{coll} \label{c:sdr5} Let $M$ and $N$ be objects of an abelian category $\mathcal{A}$, 
$M'$ a direct summand of $M$ and $N'$ a direct summand of $N$. 
\begin{enumerate} 
\item If $N$ is $M$-CS-Rickart, then $N'$ is $M'$-CS-Rickart.
\item If $N$ is dual $M$-CS-Rickart, then $N'$ is dual $M'$-CS-Rickart.
\end{enumerate}
\end{coll}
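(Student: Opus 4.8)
The plan is to derive this immediately from Theorem \ref{t:sdr1} by exhibiting the appropriate (co)restriction morphisms. For part (1), since $M'$ is a direct summand of $M$, write $M = M' \oplus M''$ and let $p : M \to M'$ be the canonical projection; this is an epimorphism which is a retraction (the canonical injection $M' \to M$ splits it). Similarly, since $N'$ is a direct summand of $N$, the canonical injection $\iota : N' \to N$ is a monomorphism which is a section. Now apply Theorem \ref{t:sdr1}(1) with $r = p : M \to M'$ and $s = \iota : N' \to N$: since $r$ is a retraction and $N$ is $M$-CS-Rickart by hypothesis, we conclude that $N'$ is $M'$-CS-Rickart.

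For part (2), the argument is dual. Write $M = M' \oplus M''$ again, but now use the canonical injection $j : M' \to M$, which is a monomorphism and a section; and write $N = N' \oplus N''$ with canonical projection $q : N \to N'$, an epimorphism and a retraction. Here one should instead invoke the dual of Theorem \ref{t:sdr1} — that is, Theorem \ref{t:sdr1} as it specializes under the duality principle, giving: if $r' : M' \to M$ is a section (monomorphism) and $N$ is dual $M$-CS-Rickart, then... — wait, the roles of $M$ and $M'$ in part (2) of Theorem \ref{t:sdr1} already have $s : N' \to N$ a section and $r : M \to M'$ an epimorphism that need not be special. Reading Theorem \ref{t:sdr1}(2) literally, it requires $s$ to be a section and $N$ dual $M$-CS-Rickart to conclude $N'$ dual $M'$-CS-Rickart; but it makes no restriction-type demand on $r$ beyond being an epimorphism. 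So for part (2) I take $r : M \to M'$ to be the projection (an epimorphism) and $s : N' \to N$ the canonical injection (a section), and Theorem \ref{t:sdr1}(2) applies directly: $N'$ is dual $M'$-CS-Rickart.

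The only subtlety — and it is minor — is checking that the hypotheses of Theorem \ref{t:sdr1} are met with the correct morphisms: in part (1) we need $r$ (the map $M \to M'$) to be a \emph{retraction}, which is exactly the statement that $M'$ is a direct summand of $M$; in part (2) we need $s$ (the map $N' \to N$) to be a \emph{section}, which is exactly the statement that $N'$ is a direct summand of $N$. Both are guaranteed by hypothesis, so no real obstacle arises. I expect no hard step here; this is a routine specialization, and the content of the corollary lies entirely in Theorem \ref{t:sdr1}, whose proof (via the pushout/pullback diagram and Stenström's lemma on pullbacks of essential monomorphisms) has already been carried out.
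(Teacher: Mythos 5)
Your proof is correct and matches the paper's intended derivation: Corollary \ref{c:sdr5} is stated there precisely as a consequence of Theorem \ref{t:sdr1}, obtained exactly as you do, by taking $r\colon M\to M'$ to be the split projection (a retraction) and $s\colon N'\to N$ the split injection (a section, though in part (1) only a monomorphism is needed). Nothing further is required.
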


Recall that an object $M$ of an abelian category has the \emph{(strong) summand intersection property}, for short SIP (SSIP), 
if the intersection of any two (family of) direct summands of $M$ is a direct summand of $M$. The dual property is called the \emph{(strong) summand sum property}, for short SSP (SSSP). It is known that self-Rickart objects have SIP and dual self-Rickart objects have SSP \cite[Corollary~3.10]{CK}. In case of (dual) self-CS-Rickart objects, we need the following generalizations of SIP (SSIP) and SSP (SSSP) properties, inspired by the corresponding module-theoretic notions \cite{ANQ,Karab}. 

\begin{defn} \rm An object $M$ of an abelian category $\mathcal{A}$ has: 
\begin{enumerate}
\item \emph{SSIP-extending} (\emph{SIP-extending}) if for any family of (two) subobjects of $M$ 
which are essential in direct summands of $M$, their intersection is essential in a direct summand of $M$.
\item \emph{SSSP-lifting} (\emph{SSP-lifting}) if for any family of (two) subobjects of $M$ 
which lie above direct summands of $M$, their sum lies above a direct summand of $M$.
\end{enumerate}
\end{defn}

The following lemma, which generalizes \cite[Lemma~3.6]{ANQ}, will be implicitly used. 

\begin{lemm} \label{l:SIPSSP} Let $M$ be an object of an abelian category $\A$. Then:
\begin{enumerate}
\item $M$ has SIP-extending if and only if the intersection of any two direct summands of $M$ 
is essential in a direct summand of $M$. 
\item $M$ has SSP-lifting if and only if the sum of any two direct summands of $M$ lies above a direct summand of $M$.
\end{enumerate}
\end{lemm}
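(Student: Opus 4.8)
The plan is to prove statement (1) and obtain (2) by the duality principle in $\A$. The nontrivial content is the ``if'' part, so I first dispose of the ``only if'' part: any direct summand $K$ of $M$ is trivially an essential subobject of itself, hence if $M$ has SIP-extending then, applied to two direct summands $K_1,K_2$ of $M$, the definition gives that $K_1\cap K_2$ is essential in a direct summand of $M$.

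For the converse, assume that the intersection of any two direct summands of $M$ is essential in a direct summand of $M$, and let $A_1,A_2$ be subobjects of $M$ with $A_i$ essential in a direct summand $K_i$ of $M$ ($i=1,2$). The key step is the categorical version of the classical module-theoretic fact that the intersection of two essential subobjects (formed inside their respective overobjects) is again essential: I claim $A_1\cap A_2$ is essential in $K_1\cap K_2$. This I would check directly from the definition of essential subobject, working in the lattice of subobjects of $M$: if $X\le K_1\cap K_2$ satisfies $X\cap(A_1\cap A_2)=0$, then $X\cap A_1$ is a subobject of $K_2$ with $(X\cap A_1)\cap A_2=0$, so $X\cap A_1=0$ because $A_2$ is essential in $K_2$, and then $X=0$ because $A_1$ is essential in $K_1$.

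By hypothesis $K_1\cap K_2$ is essential in a direct summand $K$ of $M$. Since a composite of essential monomorphisms is again an essential monomorphism (transitivity of essential extensions; see \cite[Chapter~5]{St}), it follows that $A_1\cap A_2$ is essential in $K$, a direct summand of $M$, so $M$ has SIP-extending. The only mildly delicate point is the purely lattice-theoretic manipulation of essential subobjects in the abelian setting, where intersections are pullbacks and one cannot chase elements; but each such identity reduces to a one-line argument from the definition of essential subobject, so it is really no obstacle. The rest is formal, and statement (2) is the verbatim dual of (1).
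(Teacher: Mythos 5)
Your proof is correct and follows essentially the same route as the paper: the ``only if'' direction via each summand being essential in itself, and the converse via the fact that $A_1\cap A_2$ is essential in $K_1\cap K_2$ combined with transitivity of essential extensions. You merely spell out the lattice-theoretic verification that the paper leaves implicit, which is fine.
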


\begin{proof} (1) Assume that $M$ has SIP-extending. Let $M_1$ and $M_2$ be direct summands of $M$. 
Since each of them is essential in itself, it follows that $M_1\cap M_2$ is essential in a direct summand of $M$. 

Conversely, assume that the intersection of any two direct summands of $M$ 
is essential in a direct summand of $M$. Let $M_1$ and $M_2$ be subobjects of $M$ 
which are essential in direct summands $N_1$ and $N_2$ of $M$ respectively. Then $N_1\cap N_2$ is essential 
in a direct summand $N$ of $M$. Hence $M_1\cap M_2$ is essential in $N_1\cap N_2$, 
whence it follows that $M_1\cap M_2$ is essential in $N$.
\end{proof}

We point out that the version for SSIP-extending (SSSP-lifting) objects of Lemma \ref{l:SIPSSP} does not hold, 
because an arbitrary intersection (sum) of essential (superfluous) subobjects need not be an essential (superfluous) subobject.

\begin{prop} \label{p:sdr3} Let $M$ and $N$ be objects of an abelian category $\mathcal{A}$. 
\begin{enumerate}
\item Assume that every direct summand of $M$ is isomorphic to a subobject of $N$, and $N$ is $M$-CS-Rickart. 
Then $M$ has SIP-extending.
\item Assume that every direct summand of $N$ is isomorphic to a factor object of $M$, 
and $N$ is dual $M$-CS-Rickart. Then $N$ has SSP-lifting.
\end{enumerate}
\end{prop}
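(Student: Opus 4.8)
The plan is to prove only part (1); part (2) then follows by the duality principle in abelian categories, as everywhere else in the paper. By Lemma \ref{l:SIPSSP}(1), it suffices to show that the intersection of any two direct summands of $M$ is essential in a direct summand of $M$.

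So I would fix direct summands $M_1$ and $M_2$ of $M$ and write $M = M_1 \oplus M_1'$. The first step is to exhibit $M_1 \cap M_2$ as the kernel of a morphism with domain $M_2$. Let $\pi : M \to M_1'$ be the projection along $M_1$, so that ${\rm ker}(\pi)$ is the inclusion $M_1 \to M$, and let $i : M_2 \to M$ be the inclusion of the direct summand $M_2$. Then the composite $\pi i : M_2 \to M_1'$ has kernel $M_1 \cap M_2$ (it is the pullback of $M_1 \to M$ along $i$). Since $M_1'$ is a direct summand of $M$, the hypothesis supplies a monomorphism $u : M_1' \to N$, and then $f := u \pi i : M_2 \to N$ still satisfies ${\rm Ker}(f) = M_1 \cap M_2$, because $u$ is monic.

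Next I would invoke Corollary \ref{c:sdr5}(1) with $N' = N$ (viewed as a direct summand of itself) and $M' = M_2$, which gives that $N$ is $M_2$-CS-Rickart. Applying this to $f$, the subobject ${\rm Ker}(f) = M_1 \cap M_2$ is essential in a direct summand $K$ of $M_2$. Since $K$ is a direct summand of the direct summand $M_2$ of $M$, it is a direct summand of $M$, and the essentiality of $M_1 \cap M_2$ in $K$ does not depend on the ambient object. Hence $M_1 \cap M_2$ is essential in a direct summand of $M$, and Lemma \ref{l:SIPSSP}(1) yields that $M$ has SIP-extending.

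The step that needs the most care is the construction of $f$ in the first place: the hypothesis only lets us embed direct summands of $M$ into $N$, so one cannot simply build a single morphism $M \to N$ with kernel $M_1 \cap M_2$ — the natural target $M_1' \oplus M_2'$ of the map $(\pi_1, \pi_2)$ need not be a direct summand of $M$, hence need not embed in $N$. Passing to the direct summand $M_2$ and exploiting the direct-summand stability of the (relative) CS-Rickart property (Corollary \ref{c:sdr5}) is exactly what circumvents this obstacle; the remaining verifications — that ${\rm Ker}(\pi i) = M_1 \cap M_2$, that precomposition is irrelevant and postcomposition with a monomorphism preserves the kernel, and that a direct summand of a direct summand is a direct summand — are routine.
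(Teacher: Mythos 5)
Your argument is correct, but it is not the route the paper takes, and the comparison is worth recording. The paper's proof picks subobjects $N_1\cong M_1$, $N_2\cong M_2$ of $N$, notes that $M_2$ being a direct summand of $M$ splits $0\to M_2\to M_1+M_2\to (M_1+M_2)/M_2\to 0$, and then transfers the CS-Rickart property along monomorphisms into $N$ (the full strength of Theorem \ref{t:sdr1}, with an arbitrary section-free monomorphism $s:N'\to N$, applied to $N_1+N_2$ and to $N_1/(N_1\cap N_2)\cong (N_1+N_2)/N_2$) so that $M_1/(M_1\cap M_2)$ becomes $M_1$-CS-Rickart; applying the defining property to the canonical epimorphism $M_1\to M_1/(M_1\cap M_2)$, whose kernel is $M_1\cap M_2$, finishes the proof. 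You instead manufacture a single explicit morphism with the right kernel: $f=u\pi i:M_2\to M\to M_1'\to N$, where $M_1'$ is a complement of $M_1$ and $u$ comes from the hypothesis applied to the direct summand $M_1'$; then you only need the direct-summand transfer in the first variable (Corollary \ref{c:sdr5}, i.e.\ Theorem \ref{t:sdr1} with $s=1_N$) to see that $N$ is $M_2$-CS-Rickart, and the conclusion follows from Lemma \ref{l:SIPSSP}. What your version buys is economy and robustness: it avoids the second-variable transfer along non-summand subobjects of $N$ altogether, and it sidesteps the identifications of sums and quotients across the abstract isomorphisms $M_i\cong N_i$ (the passage from $(M_1+M_2)/M_2$ to $(N_1+N_2)/N_2$ and the "induced" sequence $0\to M_1\cap M_2\to M_1\to N_1/(N_1\cap N_2)\to 0$), which in the paper's write-up are left implicit and are really only legitimate after replacing $N_1/(N_1\cap N_2)$ by an isomorphic copy of $M_1/(M_1\cap M_2)$ embedded in $N$ via a complement of $M_2$; the paper's approach, in exchange, exhibits the kernel as that of a canonical quotient map and showcases the stronger transfer theorem. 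Your remaining steps (kernel of $\pi i$ as the pullback $M_1\cap M_2$, invariance of the kernel under postcomposition with the monomorphism $u$, transitivity of direct summands, and essentiality being intrinsic to the pair $(M_1\cap M_2, K)$) are all sound, and the appeal to duality for part (2) matches the paper's convention.
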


\begin{proof} (1) Let $M_1$ and $M_2$ be direct summands of $M$. 
Then $M_1\cong N_1$ and $M_2\cong N_2$ for some subobjects $N_1$ and $N_2$ of $N$.
Since $N$ is $M$-CS-Rickart, $N_1+N_2$ is $M$-CS-Rickart by Theorem~\ref{t:sdr1}. The canonical short exact sequence $$0\to
M_2\to M_1+M_2\to (M_1+M_2)/M_2\to 0$$ splits, because $M_2$ is a direct summand of $M$. Then there exists a
monomorphism $(M_1+M_2)/M_2\to M_1+M_2$, and thus a monomorphism $(N_1+N_2)/N_2\to N_1+N_2$. Then $N_1/(N_1\cap N_2)\cong
(N_1+N_2)/N_2$ is $M$-CS-Rickart by Theorem~\ref{t:sdr1}. Since $M_1$ is a direct summand of $M$, it follows that 
$N_1/(N_1\cap N_2)$ is $M_1$-CS-Rickart again by Theorem~\ref{t:sdr1}. Consider the induced short exact sequence 
$$0\to M_1\cap M_2\to M_1\to N_1/(N_1\cap N_2)\to 0.$$ Since $M_1$ is a direct summand of $M$ and $N_1/(N_1\cap N_2)$ is $M_1$-CS-Rickart, it follows that $M_1\cap M_2$ is essential in a direct summand of $M$. Then $M$ has SIP-extending by Lemma \ref{l:SIPSSP}. 
\end{proof}

The following consequence generalizes \cite[Propositions~1,2]{AN1}.

\begin{coll} \label{c:sdr4} Let $\mathcal{A}$ be an abelian category. 
\begin{enumerate}
\item Every self-CS-Rickart object of $\mathcal{A}$ has SIP-extending.
\item Every dual self-CS-Rickart object of $\mathcal{A}$ has SSP-lifting.
\end{enumerate}
\end{coll}

The following lemma, which generalizes \cite[Proposition~3.7]{ANQ}, will be useful.

\begin{lemm} \label{l:AB} Let $A$ and $B$ be objects of an abelian category $\A$.  
\begin{enumerate}
\item If $A\oplus B$ has SIP-extending, then $B$ is $A$-CS-Rickart.
\item If $A\oplus B$ has SSP-lifting, then $B$ is dual $A$-CS-Rickart.
\end{enumerate} 
\end{lemm}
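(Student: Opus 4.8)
The plan is to prove part (1); part (2) then follows by the duality principle in abelian categories. So suppose $A\oplus B$ has SIP-extending, and let $f:A\to B$ be an arbitrary morphism. We must show that $\mathrm{Ker}(f)$ is essential in a direct summand of $A$. The natural idea, borrowed from the module-theoretic proof of \cite[Proposition~3.7]{ANQ}, is to realize $\mathrm{Ker}(f)$ (up to the canonical identification of $A$ with the first summand of $A\oplus B$) as the intersection of two direct summands of $A\oplus B$, then apply the SIP-extending hypothesis together with Lemma~\ref{l:SIPSSP}(1).

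Concretely, inside $M:=A\oplus B$ consider the two subobjects $M_1:=A\oplus 0$ and $M_2:=\{(a,f(a))\mid a\in A\}$, the graph of $f$. Both are direct summands of $M$: $M_1$ obviously is, with complement $0\oplus B$, while $M_2$ is the image of the section $\begin{pmatrix}1_A\\ f\end{pmatrix}:A\to A\oplus B$ (which is split by the first projection), hence a direct summand with complement $0\oplus B$ as well. Categorically one phrases this via the monomorphisms $i_A:A\to A\oplus B$ and $(1_A,f):A\to A\oplus B$ and checks that each is a section. The key computation is that $M_1\cap M_2$, as a subobject of $M_1\cong A$, is exactly $\mathrm{Ker}(f)$: an element $(a,f(a))$ of the graph lies in $A\oplus 0$ precisely when $f(a)=0$. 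In categorical language one forms the pullback of $i_A$ and $(1_A,f)$ and identifies it with $\mathrm{ker}(f):\mathrm{Ker}(f)\to A$ composed with $i_A$; this is a routine diagram chase, or can be seen by noting that the pullback of $A\xrightarrow{i_A} A\oplus B \xleftarrow{(1_A,f)} A$ computes the equalizer of the two composites $A\rightrightarrows A\oplus B$, whose difference is $(0,f)$, so the pullback is $\mathrm{Ker}(f)$.

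Having identified $M_1\cap M_2\cong\mathrm{Ker}(f)$ (sitting inside $M_1\cong A$), I invoke Lemma~\ref{l:SIPSSP}(1): since $M$ has SIP-extending, the intersection of the two direct summands $M_1$ and $M_2$ is essential in a direct summand $N$ of $M$. It remains to descend this from $M=A\oplus B$ to $A$. Since $\mathrm{Ker}(f)\le M_1$ and $M_1$ is a direct summand of $M$, and $\mathrm{Ker}(f)$ is essential in the direct summand $N$ of $M$, one checks that $N\le M_1$ (an essential extension of a subobject of $M_1$ inside $M$ must still lie in $M_1$, because $N\cap(0\oplus B)$ meets $M_1$ trivially and hence meets the essential subobject $\mathrm{Ker}(f)$ trivially, forcing $N\cap(0\oplus B)=0$; then $N=N\cap(M_1\oplus(0\oplus B))=M_1\cap N$ by modularity). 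A direct summand of $M$ contained in the direct summand $M_1$ is a direct summand of $M_1$. Identifying $M_1$ with $A$, we conclude that $\mathrm{Ker}(f)$ is essential in a direct summand of $A$, i.e.\ $B$ is $A$-CS-Rickart.

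The main obstacle I anticipate is the passage from $M=A\oplus B$ down to $A$ — i.e.\ showing that the direct summand $N$ of $M$ in which $\mathrm{Ker}(f)$ is essential can be taken inside $M_1\cong A$. The argument above uses modularity of the subobject lattice and the fact that an essential extension cannot escape a direct summand containing the original subobject; both are standard in abelian categories but must be phrased carefully with genuine subobjects rather than elements. Everything else — that the graph of $f$ is a direct summand, and that $M_1\cap M_2\cong\mathrm{Ker}(f)$ — is a formal pullback/pushout manipulation of the kind already used repeatedly in the proof of Theorem~\ref{t:sdr1}, citing \cite[Lemma~2.9]{CK} as needed.
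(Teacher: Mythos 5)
Your overall strategy coincides with the paper's: realize ${\rm Ker}(f)$ as the intersection of the two direct summands $M_1=A\oplus 0$ and the graph $M_2={\rm Im}\left(\left[\begin{smallmatrix}1\\ f\end{smallmatrix}\right]\right)$ of $f$, then invoke SIP-extending; those steps are correct and are exactly what the paper does. The gap is in your descent from $A\oplus B$ to $A$. Your claim that the direct summand $N$ of $M=A\oplus B$ in which ${\rm Ker}(f)$ is essential must satisfy $N\le M_1$ is false, and the lattice step is a misapplication of the modular law: from $N\cap(0\oplus B)=0$ you infer $N=N\cap\bigl(M_1+(0\oplus B)\bigr)=(N\cap M_1)+(N\cap(0\oplus B))=N\cap M_1$, but that middle equality is distributivity, not modularity; the modular law would require $M_1\le N$ or $0\oplus B\le N$, neither of which holds. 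Concretely, take $A=\mathbb{Z}_4$, $B=\mathbb{Z}_2$ and $f$ the canonical epimorphism: then ${\rm Ker}(f)=\{(0,0),(2,0)\}$ is essential in the graph $M_2=\langle(1,1)\rangle$, which is a direct summand of $A\oplus B$ satisfying $M_2\cap(0\oplus B)=0$ but $M_2\not\le M_1$, and the SIP-extending hypothesis may hand you exactly this summand. (What cannot escape the direct summand $M_1$ are essential extensions of $M_1$ itself, since summands are essentially closed; essential extensions of a mere subobject of $M_1$ can escape, as this example shows.)

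So your argument is incomplete precisely where the real content lies. From $N\cap(0\oplus B)=0$ one only gets that the projection $\pi_A:M\to A$ restricts to a monomorphism on $N$ which is the identity on ${\rm Ker}(f)$, hence ${\rm Ker}(f)$ is essential in the subobject $\pi_A(N)$ of $A$; it remains to produce a \emph{direct summand} of $A$ containing ${\rm Ker}(f)$ essentially, and nothing in your proposal addresses why such a summand exists. For comparison, the paper's own proof simply asserts at this point that ``${\rm Ker}(f)$ is essential in a direct summand of $A\oplus B$, and so it is essential in a direct summand of $A$,'' leaning on the module-theoretic antecedent \cite[Proposition~3.7]{ANQ}; you correctly identified this as the delicate step, but the justification you supply does not work and must be replaced by an actual proof of the descent statement (a subobject of the summand $A$ of $M$ which is essential in a direct summand of $M$ is essential in a direct summand of $A$), which is not a formal consequence of modularity.
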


\begin{proof} (1) Let $f:A\to B$ be a morphism in $\A$. 
Since $\left[\begin{smallmatrix} 1&0 \end{smallmatrix}\right]\left[\begin{smallmatrix} 1 \\ 0 \end{smallmatrix}\right]=1_A$ 
and $\left[\begin{smallmatrix} 1&0 \end{smallmatrix}\right]\left[\begin{smallmatrix} 1 \\ f \end{smallmatrix}\right]=1_A$,
the morphisms $\left[\begin{smallmatrix} 1 \\ 0 \end{smallmatrix}\right]:A\to A\oplus B$ 
and $\left[\begin{smallmatrix} 1 \\ f \end{smallmatrix}\right]:A\to A\oplus B$ are sections,
and thus ${\rm Im}\left(\left[\begin{smallmatrix} 1 \\ 0 \end{smallmatrix}\right]\right)$ and 
${\rm Im}\left(\left[\begin{smallmatrix} 1 \\ f \end{smallmatrix}\right]\right)$ are direct summands of $A\oplus B$.
Since $A\oplus B$ has SIP-extending, 
${\rm Im}\left(\left[\begin{smallmatrix} 1 \\ 0 \end{smallmatrix}\right]\right)\cap 
{\rm Im}\left(\left[\begin{smallmatrix} 1 \\ f \end{smallmatrix}\right]\right)$ is essential in a direct summand of $A\oplus B$.
We have the following commutative diagram
$$\SelectTips{cm}{}
\xymatrix{
0 \ar[r] & {\rm Ker}(f) \ar[r] \ar[d] & A \ar[r]^f \ar[d]^{\left[\begin{smallmatrix} 1 \\ 
f \end{smallmatrix}\right]} & B \ar@{=}[d] &  \\
0 \ar[r] & A \ar[r]_-{\left[\begin{smallmatrix} 1 \\ 0 \end{smallmatrix}\right]} & 
A\oplus B \ar[r]_-{\left[\begin{smallmatrix} 0 & 1 \end{smallmatrix}\right]} & B & 
}
$$
whose left square is a pullback and all the morphisms of the left square are monomorphisms. 
It follows that $${\rm Ker}(f)={\rm Im}\left(\left[\begin{smallmatrix} 1 \\ 0 \end{smallmatrix}\right]\right)\cap 
{\rm Im}\left(\left[\begin{smallmatrix} 1 \\ f \end{smallmatrix}\right]\right).$$ 
Then ${\rm Ker}(f)$ is essential in a direct summand of $A\oplus B$, and thus 
it is essential in a direct summand of $A$. Hence $B$ is $A$-CS-Rickart.
\end{proof}

\begin{coll} Let $M$ be an object of an abelian category $\A$.  
\begin{enumerate}
\item If $M\oplus M$ has SIP-extending, then $M$ is self-CS-Rickart.
\item If $M\oplus M$ has SSP-lifting, then $M$ is dual self-CS-Rickart.
\end{enumerate} 
\end{coll}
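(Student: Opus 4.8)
The plan is to derive both parts of this corollary as the special case $A = B = M$ of Lemma~\ref{l:AB}, so essentially no new work is required beyond bookkeeping. For part (1), I would assume that $M \oplus M$ has SIP-extending and observe that this is precisely the hypothesis of Lemma~\ref{l:AB}(1) with $A = M$ and $B = M$; that lemma then yields that $M$ is $M$-CS-Rickart, which is by definition the statement that $M$ is self-CS-Rickart. For part (2), the same substitution into Lemma~\ref{l:AB}(2) gives that $M$ is dual $M$-CS-Rickart, i.e.\ dual self-CS-Rickart, from the assumption that $M \oplus M$ has SSP-lifting. The duality principle in abelian categories also delivers (2) from (1), consistent with the paper's stated convention of only proving the non-dual statements.

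Concretely, for (1) I would take an arbitrary morphism $f : M \to M$ and, following the proof of Lemma~\ref{l:AB}, note that the two morphisms $\left[\begin{smallmatrix} 1 \\ 0 \end{smallmatrix}\right] : M \to M \oplus M$ and $\left[\begin{smallmatrix} 1 \\ f \end{smallmatrix}\right] : M \to M \oplus M$ are sections (split by $\left[\begin{smallmatrix} 1 & 0 \end{smallmatrix}\right]$), hence their images are direct summands of $M \oplus M$. The SIP-extending hypothesis makes their intersection essential in a direct summand of $M \oplus M$, and the pullback square identifying ${\rm Ker}(f)$ with this intersection shows ${\rm Ker}(f)$ is essential in a direct summand of $M \oplus M$, hence in a direct summand of $M$. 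That is exactly $M$-CS-Rickartness of $M$.

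There is no genuine obstacle here: the corollary is an immediate specialization, and the only thing to be careful about is that the ambient object in Lemma~\ref{l:AB} is $A \oplus B$, which becomes $M \oplus M$ after the substitution, so the hypotheses translate verbatim. The one point worth recording explicitly is the passage from ``essential in a direct summand of $M \oplus M$'' to ``essential in a direct summand of $M$'' when the relevant summand happens to sit inside the first copy of $M$; this is already handled inside the proof of Lemma~\ref{l:AB} and needs no repetition. Accordingly the proof can be stated in one or two lines, simply invoking Lemma~\ref{l:AB} with $A = B = M$ and unwinding the definition of (dual) self-CS-Rickart.

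\begin{proof}
(1) Apply Lemma~\ref{l:AB}(1) with $A=B=M$: if $M\oplus M$ has SIP-extending, then $M$ is $M$-CS-Rickart, that is, $M$ is self-CS-Rickart.
\end{proof}
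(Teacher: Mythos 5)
Your proposal is correct and matches the paper's intent: the corollary is stated immediately after Lemma~\ref{l:AB} as its specialization $A=B=M$, and the paper gives no separate proof precisely because it follows verbatim as you describe (with part (2) by duality).
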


\begin{ex} \label{e:SIP} \rm Consider the $\mathbb{Z}$-module $M=\mathbb{Z}_2\oplus \mathbb{Z}_{16}$. 
Its direct summands are $\{(0,0)\}$, $M$ and the $\mathbb{Z}$-submodules $A=\langle(1,1)\rangle$, $B=\langle(0,1)\rangle$, 
$C=\langle(1,0)\rangle$ and $D=\langle(1,8)\rangle$ generated by the specified elements of $M$. 
We have $$M=A\oplus C=A\oplus D=B\oplus C=B\oplus D.$$ The intersection of any two direct summands of $M$ may be 
$\{(0,0)\}$, $M$, $A$, $B$, $C$, $D$ or $A\cap B$. Since $A\cap B$ is clearly essential 
in the local indecomposable $\mathbb{Z}$-module $A$, it follows that 
$M$ has SIP-extending by Lemma \ref{l:SIPSSP}. Then $\mathbb{Z}_{16}$ is $\mathbb{Z}_2$-CS-Rickart, 
and $\mathbb{Z}_2$ is $\mathbb{Z}_{16}$-CS-Rickart by Lemma \ref{l:AB}.
Note that $M$ does not have SIP (summand intersection property), because $A\cap B$ is not a direct summand of $M$.

We claim that $M$ is not self-CS-Rickart. To this end, let $f:M\to M$ be the homomorphism defined by 
$$f(a+2\mathbb{Z},b+16\mathbb{Z})=(b+2\mathbb{Z},2b+16\mathbb{Z}).$$ 
Then ${\rm Ker}(f)=\mathbb{Z}_2\oplus (8\mathbb{Z}/16\mathbb{Z})$. 
The only direct summand of $M$ containing ${\rm Ker}(f)$ is $M$, 
and ${\rm Ker}(f)$ is not essential in $M$. Hence $M$ is not self-CS-Rickart.
Therefore, there exist modules which have SIP-extending, but are not self-CS-Rickart.  
\end{ex}

\section{(Co)products of (dual) relative CS-Rickart objects}

In this section we analyze the behaviour of (dual) relative CS-Rickart and, in particular, (dual) self-CS-Rickart objects with respect to (co)products. 

We begin with the study of finite (co)products of (dual) relative CS-Rickart objects.

\begin{theo} \label{t:sdr2} Let $\mathcal{A}$ be an abelian category. 
\begin{enumerate} \item Let $M$, $N_1$ and $N_2$ be objects of $\mathcal{A}$
such that $N_1$ and $N_2$ are $M$-CS-Rickart. Then $N_1\oplus N_2$ is $M$-CS-Rickart.  
\item Let $M_1$, $M_2$ and $N$ be objects of $\mathcal{A}$ such that $N$ is
dual $M_1$-CS-Rickart and dual $M_2$-CS-Rickart. Then $N$ is dual $M_1\oplus M_2$-CS-Rickart. 
\end{enumerate}
\end{theo}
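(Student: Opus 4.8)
The plan is to prove statement (1) directly from the characterisation of $M$-CS-Rickart objects as those $N$ for which $\mathrm{Ker}(f)$ is essential in a direct summand of $M$ for every $f\colon M\to N$; statement (2) then follows by applying the duality principle in $\mathcal{A}$, exactly as announced in the introduction. So let $f\colon M\to N_1\oplus N_2$ be a morphism and write $f=\left[\begin{smallmatrix} f_1 \\ f_2 \end{smallmatrix}\right]$ with $f_i=\pi_i f\colon M\to N_i$, where $\pi_1,\pi_2$ are the canonical projections. Since $\mathrm{Ker}(f)=\mathrm{Ker}(f_1)\cap\mathrm{Ker}(f_2)$, the idea is to absorb the two components one at a time.

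First, because $N_1$ is $M$-CS-Rickart, $\mathrm{Ker}(f_1)$ is essential in some direct summand $L$ of $M$. Let $i\colon L\to M$ be the inclusion. By Corollary~\ref{c:sdr5}, $N_2$ is $L$-CS-Rickart, so applying this to $f_2 i\colon L\to N_2$ produces a direct summand $L'$ of $L$ in which $\mathrm{Ker}(f_2 i)$ is essential. A direct summand of a direct summand is again a direct summand, so $L'$ is a direct summand of $M$, and this will be the required summand. It remains to check that $\mathrm{Ker}(f)$ is essential in $L'$. The key identifications are $\mathrm{Ker}(f_2 i)=\mathrm{Ker}(f_2)\cap L$ (kernel of a composite with a monomorphism) and $\mathrm{Ker}(f_1)$ a subobject of $L$, which together give $\mathrm{Ker}(f)=\mathrm{Ker}(f_1)\cap\mathrm{Ker}(f_2)=\mathrm{Ker}(f_1)\cap\mathrm{Ker}(f_2 i)$. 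Since $\mathrm{Ker}(f_1)$ is essential in $L$ and $L'$ is a subobject of $L$, the subobject $\mathrm{Ker}(f_1)\cap L'$ is essential in $L'$; and $\mathrm{Ker}(f_2 i)$ is essential in $L'$ by construction. As $\mathrm{Ker}(f_2 i)$ is a subobject of $L'$, we get $\mathrm{Ker}(f)=(\mathrm{Ker}(f_1)\cap L')\cap\mathrm{Ker}(f_2 i)$, the intersection of two subobjects essential in $L'$, hence itself essential in $L'$. Thus $\mathrm{Ker}(f)$ is essential in the direct summand $L'$ of $M$, so $N_1\oplus N_2$ is $M$-CS-Rickart.

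The only facts used beyond what is recalled in the excerpt are standard properties of essential subobjects in an abelian category: if $A$ is essential in $B$ and $C$ is a subobject of $B$, then $A\cap C$ is essential in $C$, and a finite intersection of subobjects essential in a common object is again essential (both provable by the usual ``nonzero subobject'' argument, cf.\ \cite[Chapter~5]{St}). The only point requiring care, and the main potential obstacle, is the bookkeeping between the two kernels — in particular ensuring that the summand $L'$ obtained at the second step genuinely lies inside $L$, so that the two essentiality conditions are relative to the same object $L'$ and can be combined. One may also bypass the essential-subobject lemmas by a direct chase: given a subobject $X$ of $L'$ with $X\cap\mathrm{Ker}(f)=0$, intersect with $\mathrm{Ker}(f_2 i)$ and use essentiality of $\mathrm{Ker}(f_1)$ in $L$ to get $X\cap\mathrm{Ker}(f_2 i)=0$, then use essentiality of $\mathrm{Ker}(f_2 i)$ in $L'$ to conclude $X=0$.
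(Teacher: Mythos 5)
Your argument is correct, and its skeleton is the same as the paper's: both proofs absorb the two components one at a time, first using the CS-Rickart hypothesis for one component over $M$ to produce a direct summand, and then using the fact that relative CS-Rickartness passes to direct summands (your Corollary~\ref{c:sdr5}; the paper invokes Theorem~\ref{t:sdr1} directly) to handle the second component over that summand -- the paper merely starts with $N_2$ where you start with $N_1$. The genuine difference is in the execution. The paper works with the factorization form of the definition, ${\rm ker}(f)=se$ with $e$ an essential monomorphism and $s$ a section, and runs an explicit diagram chase: it assembles a commutative diagram, uses the pullback/pushout lemma \cite[Lemma~2.9]{CK} together with \cite[Chapter~5, Lemma~7.1]{St} to transport the essential monomorphism across a pullback, and finally writes ${\rm ker}(f)$ as the composite $uv\cdot ee_1$ of an essential monomorphism followed by a section. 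You stay entirely in the subobject lattice: ${\rm Ker}(f)={\rm Ker}(f_1)\cap{\rm Ker}(f_2)={\rm Ker}(f_1)\cap{\rm Ker}(f_2 i)$, and essentiality of this in $L'$ follows from the two standard facts that an essential subobject meets every subobject of the ambient object essentially and that the intersection of two essential subobjects is essential (both valid in any abelian category with the definition recalled in Section~2, and provable by the chase you sketch). Your identification ${\rm Ker}(f_2 i)={\rm Ker}(f_2)\cap L$ plays the role of the paper's pullback argument, and the point you flag -- that $L'$ is a summand of $L$, hence of $M$, so that both essentiality statements refer to the same object $L'$ -- is exactly the bookkeeping that makes the intersection argument legitimate. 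The payoff of your route is brevity and transparency; the payoff of the paper's route is that it produces the required factorization of ${\rm ker}(f)$ explicitly in the morphism-theoretic form used throughout the paper, without appealing to auxiliary essential-subobject lemmas. Your appeal to duality for part (2) matches the paper's stated convention.
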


\begin{proof} (1) Denote $N=N_1\oplus N_2$. Let $f:M\to N$ be a morphism in $\mathcal{A}$. 
We have the following induced commutative diagram
$$\SelectTips{cm}{}
\xymatrix{
 & 0 \ar[d] & 0 \ar[d] & \\
 & K_1 \ar@{=}[r] \ar[d]_{i_1} & K_1 \ar[d]^{i_2} & \\ 
0 \ar[r] & K_2 \ar[r]^-{k_2} \ar[d]_{f_1} & M \ar[r]^-{f_2} \ar[d]^f & N_2 \ar@{=}[d] \ar[r] & 0 \\
0 \ar[r] & N_1 \ar[r]_{j_1} & N \ar[r]_{p_2} & N_2 \ar[r] & 0 
}$$
with exact rows and columns, where $j_1:N_1\to N$ is the canonical injection and $p_2:N\to N_2$ is 
the canonical projection. 

Since $N_2$ is $M$-CS-Rickart, there exist an essential monomorphism 
$e_2:K_2\to U$ and a section $u:U\to M$ such that $k_2=ue_2$. If $p_1:N\to N_1$ is the canonical projection,
then we have $p_1fue_2=f_1$. Then we have the following induced commutative diagram:
$$\SelectTips{cm}{}
\xymatrix{
0 \ar[r] & K_1 \ar[r]^-{i_1} \ar[d]_{e_1} & K_2 \ar[r]^{f_1} \ar[d]^{e_2} & N_1 \ar@{=}[d] \\ 
0 \ar[r] & K \ar[r]_k & U \ar[r]_-{p_1fu} & N_1 \\
}$$
with exact rows. Since the square $K_1K_2UK$ is a pullback by \cite[Lemma~2.9]{CK} and 
$e_2:K_2\to U$ is an essential monomorphism, it follows that $e_1:K_1\to K$ is also an essential monomorphism 
by \cite[Chapter~5, Lemma~7.1]{St}. Since $u$ is a section, there exists a retraction $u':M\to U$. 
But $N_1$ is $M$-CS-Rickart, hence $N_1$ is also $U$-CS-Rickart by Theorem \ref{t:sdr1}. 
Then there exist an essential monomorphism $e:K\to V$ and a section $v:V\to U$ such that $k=ve$. 
It follows that $ee_1:K_1\to V$ is an essential monomorphism, $uv:V\to M$ is a section and 
$${\rm ker}(f)=i_2=k_2i_1=ue_2i_1=uke_1=uvee_1.$$ This shows that $N$ is $M$-CS-Rickart.
\end{proof}

Now we may immediately deduce our main theorem on finite (co)products involving (dual) relative CS-Rickart objects.

\begin{theo} \label{t:pr1} Let $\mathcal{A}$ be an abelian category.
\begin{enumerate} 
\item Let $M$ and $N_1,\dots,N_n$ be objects of $\mathcal{A}$. Then $\bigoplus_{i=1}^n N_i$ is $M$-CS-Rickart if and only
if $N_i$ is $M$-CS-Rickart for every $i\in \{1,\dots,n\}$.
\item Let $M_1,\dots,M_n$ and $N$ be objects of $\mathcal{A}$. Then $N$ is dual $\bigoplus_{i=1}^n M_i$-CS-Rickart if and
only if $N$ is dual $M_i$-CS-Rickart for every $i\in \{1,\dots,n\}$.
\end{enumerate}
\end{theo}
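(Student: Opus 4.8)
The plan is to deduce Theorem~\ref{t:pr1} from the binary case treated in Theorem~\ref{t:sdr2} by a routine induction on $n$, using Corollary~\ref{c:sdr5} for the converse implications. Since statement (2) is the dual of statement (1), I would prove only (1) in detail and obtain (2) by the duality principle in abelian categories.

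For the forward implication of (1), I would assume that $N_i$ is $M$-CS-Rickart for every $i\in\{1,\dots,n\}$ and proceed by induction on $n$. The case $n=1$ is trivial and $n=2$ is exactly Theorem~\ref{t:sdr2}(1). For the inductive step, the induction hypothesis gives that $\bigoplus_{i=1}^{n-1}N_i$ is $M$-CS-Rickart; since $N_n$ is also $M$-CS-Rickart, applying Theorem~\ref{t:sdr2}(1) to the pair $\bigl(\bigoplus_{i=1}^{n-1}N_i,\ N_n\bigr)$ shows that $\bigl(\bigoplus_{i=1}^{n-1}N_i\bigr)\oplus N_n=\bigoplus_{i=1}^{n}N_i$ is $M$-CS-Rickart.

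For the reverse implication, I would assume $\bigoplus_{i=1}^{n}N_i$ is $M$-CS-Rickart. Each $N_i$ is a direct summand of $\bigoplus_{i=1}^{n}N_i$, and $M$ is a direct summand of itself, so Corollary~\ref{c:sdr5}(1) immediately yields that $N_i$ is $M$-CS-Rickart for every $i$.

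I do not expect a genuine obstacle: all the substance is already contained in Theorem~\ref{t:sdr2} and Corollary~\ref{c:sdr5}, and the only point requiring a little care is organizing the induction so that the binary result is applied with one summand split off at a time. Part (2) then follows formally by dualization: a finite coproduct decomposition in $\mathcal{A}$ becomes a finite product decomposition in $\mathcal{A}^{\mathrm{op}}$, being dual $M$-CS-Rickart is the dual of being $M$-CS-Rickart, and parts~(2) of Theorem~\ref{t:sdr2} and Corollary~\ref{c:sdr5} are the duals of their respective parts~(1).
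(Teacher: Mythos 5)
Your proposal is correct and follows essentially the same route as the paper, which deduces the theorem from Theorem~\ref{t:sdr2} and Corollary~\ref{c:sdr5}; you merely spell out the induction on $n$ and the dualization that the paper leaves implicit.
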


\begin{proof} This follows by Corollary~\ref{c:sdr5} and Theorem~\ref{t:sdr2}.
\end{proof}

We illustrate the above result with an application to module categories. Following again \cite{DHSW,TV} and using the notation preceding Corollary \ref{c:Z}, for any right $R$-module $M$, denote by $Z_2(M)$ and $\overline{Z}^2(M)$ the submodules of $M$ determined by the equalities $$Z_2(M)/Z(M)=Z(M/Z(M)),$$ $$\overline{Z}^2(M)=\overline{Z}(\overline{Z}(M)).$$

\begin{coll} Let $M$ be a right $R$-module. Then:
\begin{enumerate}
\item $Z_2(M)$ and $M/Z_2(M)$ are $M$-CS-Rickart if and only if $Z_2(M)$ is a direct summand of $M$ and $M$ is self-CS-Rickart.
\item $M$ is dual $\overline{Z}^2(M)$-CS-Rickart and dual $M/\overline{Z}^2(M)$-CS-Rickart if and only if $\overline{Z}^2(M)$ is a direct summand of $M$ and $M$ is dual self-CS-Rickart.
\end{enumerate}
\end{coll}

\begin{proof} (1) Assume that $Z_2(M)$ and $M/Z_2(M)$ are $M$-CS-Rickart. Consider the natural epimorphism $p:M\to M/Z_2(M)$. By hypothesis, $Z_2(M)$ is essential in a direct summand $L$ of $M$. Since $M/Z_2(M)$ is non-singular, $Z_2(M)$ is closed, i.e., it does not have any proper essential extension \cite[4.1]{DHSW}. Hence $Z_2(M)=L$ is a direct summand of $M$. Then $M\cong Z_2(M)\oplus M/Z_2(M)$ is $M$-CS-Rickart by Theorem \ref{t:sdr2}. The converse follows by Corollary \ref{c:sdr5}.

(2) Assume that  $M$ is dual $\overline{Z}^2(M)$-CS-Rickart and dual $M/\overline{Z}^2(M)$-CS-Rickart. Consider the inclusion monomorphism $j:\overline{Z}^2(M)\to M$. By hypothesis, $\overline{Z}^2(M)$ lies above a direct summand $L$ of $M$. Since $\overline{Z}^2(M)$ is non-cosingular, $\overline{Z}^2(M)$ is coclosed, i.e., it does not lie above any proper submodule \cite[Lemma~2.3]{TV}. Hence $\overline{Z}^2(M)=L$ is a direct summand of $M$ and $M\cong \overline{Z}^2(M)\oplus M/\overline{Z}^2(M)$. Then $M$ is dual $M$-CS-Rickart by Theorem \ref{t:sdr2}. The converse follows by Corollary \ref{c:sdr5}.
\end{proof}

Under some finiteness conditions we have the following result.

\begin{coll} \label{c:pr2} Let $\mathcal{A}$ be an abelian category.
\begin{enumerate} \item Assume that $\mathcal{A}$ has coproducts, let $M$ be a finitely generated object of
$\mathcal{A}$, and let $(N_i)_{i\in I}$ be a family of objects of $\mathcal{A}$. Then $\bigoplus_{i\in I}
N_i$ is $M$-CS-Rickart if and only if $N_i$ is $M$-CS-Rickart for every $i\in I$.

\item Assume that $\mathcal{A}$ has products, let $N$ be a finitely cogenerated object of $\mathcal{A}$, and let
$(M_i)_{i\in I}$ be a family of objects of $\mathcal{A}$. Then
$N$ is dual $\prod_{i\in I} M_i$-CS-Rickart if and only if $N$ is dual $M_i$-CS-Rickart for every $i\in I$.
\end{enumerate}
\end{coll}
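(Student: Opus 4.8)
The plan is to reduce the infinite case to the finite one of Theorem~\ref{t:pr1} by exploiting the finiteness hypothesis on $M$ (respectively on $N$). I shall only discuss part (1), since part (2) follows by the duality principle in abelian categories.

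For the nontrivial implication, assume $N_i$ is $M$-CS-Rickart for every $i\in I$, and let $f:M\to\bigoplus_{i\in I}N_i$ be a morphism in $\mathcal{A}$. The key observation is that since $M$ is finitely generated and $\mathcal{A}$ has coproducts, the canonical morphism $\bigoplus_{i\in I}\Hom_{\mathcal{A}}(M,N_i)\to\Hom_{\mathcal{A}}(M,\bigoplus_{i\in I}N_i)$ is an isomorphism; this is the standard fact that a finitely generated object is small with respect to coproducts (equivalently, $\Hom_{\mathcal{A}}(M,-)$ commutes with coproducts of objects indexed by $I$ when $M$ is finitely generated). Hence $f$ factors through a finite subcoproduct: there is a finite subset $J\subseteq I$ and a morphism $g:M\to\bigoplus_{i\in J}N_i$ such that $f=\iota_J g$, where $\iota_J:\bigoplus_{i\in J}N_i\to\bigoplus_{i\in I}N_i$ is the canonical injection. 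Since $\iota_J$ is a section (hence in particular a monomorphism), we get ${\rm Ker}(f)={\rm Ker}(g)$.

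Now $\bigoplus_{i\in J}N_i$ is $M$-CS-Rickart by Theorem~\ref{t:pr1}(1), because $J$ is finite and each $N_i$ with $i\in J$ is $M$-CS-Rickart. Applying this to the morphism $g$, we conclude that ${\rm Ker}(g)={\rm Ker}(f)$ is essential in a direct summand of $M$. Since $f$ was arbitrary, $\bigoplus_{i\in I}N_i$ is $M$-CS-Rickart. The converse implication is immediate: for each $i\in I$, $N_i$ is a direct summand of $\bigoplus_{i\in I}N_i$, so if the coproduct is $M$-CS-Rickart then so is each $N_i$ by Corollary~\ref{c:sdr5}(1).

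The main point requiring care is the factorization of $f$ through a finite subcoproduct, i.e.\ verifying that a finitely generated object of an abelian category with coproducts is small in the relevant sense; once that categorical fact is in hand, everything else is a formal consequence of the finite case and of stability under direct summands. For part (2), one dualizes: a finitely cogenerated object $N$ of an abelian category with products is ``cosmall'', so every morphism $\prod_{i\in I}M_i\to N$ factors through a finite subproduct via the canonical retraction, and then Theorem~\ref{t:pr1}(2) together with Corollary~\ref{c:sdr5}(2) finishes the argument.
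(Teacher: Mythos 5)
Your proposal is correct and follows essentially the same route as the paper: factor $f$ through a finite subcoproduct using the smallness of the finitely generated object $M$, observe that ${\rm Ker}(f)={\rm Ker}(f')$, apply Theorem~\ref{t:pr1}, and handle the easy direction via Corollary~\ref{c:sdr5}, with part (2) by duality. No gaps to report.
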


\begin{proof} (1) The direct implication follows by Corollary~\ref{c:sdr5}. For the converse, let $f:M\to
\bigoplus_{i\in I} N_i$ be a morphism in $\mathcal{A}$. Since $M$ is finitely generated, we may write
$f=jf'$ for some morphism $f':M\to \bigoplus_{i\in F} N_i$ and inclusion morphism $j:\bigoplus_{i\in F} N_i\to
\bigoplus_{i\in I} N_i$, where $F$ is a finite subset of $I$. By Theorem~\ref{t:pr1}, $\bigoplus_{i\in F} N_i$ is
$M$-CS-Rickart. Then there exist an essential monomorphism 
$e:{\rm Ker}(f')\to U$ and a section $u:U\to M$ such that ${\rm ker}(f')=ue$. 
But ${\rm ker}(f)={\rm ker}(f')$. Hence $\bigoplus_{i\in I} N_i$ is $M$-CS-Rickart.
\end{proof}

The next result follows by Theorem~\ref{t:sdr1} and gives a necessary condition for 
an infinite (co)product of objects to be a (dual) self-CS-Rickart object.

\begin{prop} \label{p:pr3} Let $(M_i)_{i\in I}$ be a family of objects of an abelian category $\mathcal{A}$.
\begin{enumerate} 
\item If $\prod_{i\in I} M_i$ is a self-CS-Rickart object, then $M_i$ is $M_j$-CS-Rickart for every $i,j\in I$.
\item If $\bigoplus_{i\in I} M_i$ is a dual self-CS-Rickart object, then $M_i$ is dual $M_j$-CS-Rickart for every
$i,j\in I$.
\end{enumerate}
\end{prop}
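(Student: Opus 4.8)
The plan is to reduce the claim about the product (respectively coproduct) to the already-established Theorem~\ref{t:sdr1}, using the fact that each factor $M_i$ is a direct summand of $\prod_{i\in I} M_i$, and the fact that $\prod_{i\in I} M_i$ is self-CS-Rickart, i.e.\ $\big(\prod_{i\in I} M_i\big)$-CS-Rickart over itself. First I would fix indices $i,j\in I$ and write $P=\prod_{k\in I} M_k$. The canonical projection $\pi_i:P\to M_i$ is a retraction, since the canonical injection $\iota_i:M_i\to P$ satisfies $\pi_i\iota_i=1_{M_i}$; dually, $\iota_j:M_j\to P$ is a section. Theorem~\ref{t:sdr1}(1) takes an epimorphism $r:M\to M'$ that is a retraction and a monomorphism $s:N'\to N$ that is a section, and concludes that if $N$ is $M$-CS-Rickart then $N'$ is $M'$-CS-Rickart.

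So the key step is to apply Theorem~\ref{t:sdr1}(1) with $M=P$, $M'=M_j$, $r=\pi_j:P\to M_j$ (a retraction epimorphism), and with $N=P$, $N'=M_i$, $s=\iota_i:M_i\to P$ (a section monomorphism). Since $P$ is self-CS-Rickart, $N=P$ is $M=P$-CS-Rickart; Theorem~\ref{t:sdr1}(1) then yields that $N'=M_i$ is $M'=M_j$-CS-Rickart, which is exactly the desired conclusion for part~(1). Part~(2) follows by the duality principle in abelian categories, or equivalently by applying Theorem~\ref{t:sdr1}(2) with $M=\bigoplus_{k\in I}M_k$, $M'=M_j$, $r=\pi_j$, and $N=\bigoplus_{k\in I}M_k$, $N'=M_i$, $s=\iota_i$: here $N$ is dual $M$-CS-Rickart because $\bigoplus_{k\in I}M_k$ is dual self-CS-Rickart, $\iota_i$ is a section, and so $M_i$ is dual $M_j$-CS-Rickart.

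I do not expect any genuine obstacle: the only thing to verify is that the canonical projections and injections of an $I$-indexed (co)product have the required split-exact behaviour, which is immediate from the universal properties of products and coproducts in an abelian (indeed additive) category, $\pi_i\iota_i=1_{M_i}$. One small point worth stating explicitly is that when $i=j$ the conclusion reads "$M_i$ is self-CS-Rickart", which is consistent. No finiteness on $I$ is needed because Theorem~\ref{t:sdr1} is stated for arbitrary objects and does not restrict the index set; the hypotheses "self-CS-Rickart" and "dual self-CS-Rickart" on the (co)product do all the work. Hence the proof is just: \emph{This follows by Theorem~\ref{t:sdr1}, applied to the canonical projections $\prod_{i\in I}M_i\to M_j$ and canonical injections $M_i\to\prod_{i\in I}M_i$ (respectively the canonical injections $M_j\to\bigoplus_{i\in I}M_i$ and canonical projections $\bigoplus_{i\in I}M_i\to M_i$).}
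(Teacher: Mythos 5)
Your proposal is correct and matches the paper's own argument: the paper likewise derives Proposition \ref{p:pr3} directly from Theorem \ref{t:sdr1}, using that the canonical projection $\pi_j$ is a retraction and the canonical injection $\iota_i$ is a section for the (co)product, so that self-CS-Rickartness of the (co)product transfers to give that $M_i$ is (dual) $M_j$-CS-Rickart. Your explicit verification of $\pi_i\iota_i=1_{M_i}$ and the observation that no finiteness of $I$ is needed are exactly the points the paper leaves implicit.
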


\begin{ex} \rm Consider the $\mathbb{Z}$-module $M=\mathbb{Z}_2\oplus \mathbb{Z}_{16}$. 
We have seen in Example \ref{e:SIP} that $\mathbb{Z}_{16}$ is $\mathbb{Z}_2$-CS-Rickart, 
and $\mathbb{Z}_2$ is $\mathbb{Z}_{16}$-CS-Rickart. Also, both $\mathbb{Z}_2$ and $\mathbb{Z}_{16}$ 
are self-CS-Rickart, because they are extending. On the other hand, $M=\mathbb{Z}_2\oplus \mathbb{Z}_{16}$ is not 
self-CS-Rickart by Example \ref{e:SIP}. This shows that the converse of Proposition \ref{p:pr3} does not hold in general.
\end{ex}

We also have the following theorem on arbitrary (co)products of (dual) relative CS-Rickart objects.

\begin{theo} Let $\mathcal{A}$ be an abelian category.
\begin{enumerate}
\item Let $M$ be an object of $\mathcal{A}$ having SSIP-extending, and let $(N_i)_{i\in I}$ be a family of objects of
$\mathcal{A}$ having a product. Then $\prod_{i\in I} N_i$ is
$M$-CS-Rickart if and only if $N_i$ is $M$-CS-Rickart for every $i\in I$.
\item Let $(M_i)_{i\in I}$ be a family of objects of $\mathcal{A}$ having a coproduct, and let $N$ be an object of
$\mathcal{A}$ having SSSP-lifting. Then $N$ is dual $\bigoplus_{i\in I} M_i$-CS-Rickart if and only if 
$N$ is dual $M_i$-CS-Rickart for every $i\in I$.
\end{enumerate}
\end{theo}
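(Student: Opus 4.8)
The plan is to prove only part (1), the statement for $\prod_{i\in I} N_i$ being $M$-CS-Rickart, and obtain (2) by the duality principle in abelian categories. The direct implication is immediate: if $\prod_{i\in I} N_i$ is $M$-CS-Rickart, then since each $N_i$ is a direct summand of $\prod_{i\in I} N_i$ (via the canonical injection splitting the projection), Corollary~\ref{c:sdr5}(1) gives that $N_i$ is $M$-CS-Rickart for every $i\in I$; here $M'=M$ is a direct summand of itself.

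For the converse, suppose $N_i$ is $M$-CS-Rickart for every $i\in I$, and let $f:M\to \prod_{i\in I} N_i$ be a morphism in $\mathcal{A}$. For each $i\in I$ let $p_i:\prod_{i\in I} N_i\to N_i$ be the canonical projection and set $f_i=p_if:M\to N_i$. Since $N_i$ is $M$-CS-Rickart, ${\rm Ker}(f_i)$ is essential in a direct summand $D_i$ of $M$. The key observation is that ${\rm Ker}(f)=\bigcap_{i\in I}{\rm Ker}(f_i)$, because a morphism into a product is determined by its components; more precisely, the universal property of the product gives that $f$ factors through ${\rm Ker}(f_i)$ for all $i$ if and only if $f$ is zero, and working with subobjects of $M$ one checks ${\rm Ker}(f)=\bigcap_{i\in I}{\rm Ker}(f_i)$ as subobjects of $M$. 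The plan is then to use that $M$ has SSIP-extending: the family $({\rm Ker}(f_i))_{i\in I}$ consists of subobjects of $M$ each essential in a direct summand $D_i$ of $M$, so by definition of SSIP-extending their intersection $\bigcap_{i\in I}{\rm Ker}(f_i)={\rm Ker}(f)$ is essential in a direct summand of $M$. Hence, writing this direct summand as $L$ with section $s:L\to M$ and the induced essential monomorphism $e:{\rm Ker}(f)\to L$, we get ${\rm ker}(f)=se$, which is exactly what is required for $\prod_{i\in I} N_i$ to be $M$-CS-Rickart.

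The main obstacle I expect is justifying the identity ${\rm Ker}(f)=\bigcap_{i\in I}{\rm Ker}(f_i)$ at the level of subobjects in an arbitrary abelian category, where infinite intersections of subobjects must be handled with care. Concretely, one wants: the canonical monomorphism $\bigcap_{i\in I}{\rm Ker}(f_i)\to M$ equalizes $f$ with $0$ (so it factors through ${\rm ker}(f)$), and conversely ${\rm ker}(f)$ factors through each ${\rm ker}(f_i)$ (since $p_if\cdot{\rm ker}(f)=0$), hence through their intersection. The first direction requires knowing that a morphism $g:X\to\prod_{i\in I}N_i$ with $p_ig=0$ for all $i$ satisfies $g=0$, which is the uniqueness clause in the universal property of the product; the second direction is the defining property of the intersection as a pullback/wide pullback of the ${\rm ker}(f_i)$. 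Both are standard once one is careful that $\prod_{i\in I}N_i$ genuinely exists (which is hypothesized) and that intersections of any family of subobjects of $M$ exist in $\mathcal{A}$ — this last point follows in an abelian category since subobject lattices are complete when the relevant limits exist, and in any case the SSIP-extending hypothesis presupposes such intersections make sense. After that, the argument is a direct application of the definitions of SSIP-extending and $M$-CS-Rickart, with no further calculation needed. The dual statement (2) follows verbatim by replacing kernels, essential monomorphisms, and sections with cokernels, superfluous epimorphisms, and retractions, products with coproducts, and SSIP-extending with SSSP-lifting, as guaranteed by the duality principle in abelian categories.
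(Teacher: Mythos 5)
Your proposal is correct and follows essentially the same route as the paper: reduce the direct implication to the direct-summand results (Theorem~\ref{t:sdr1}/Corollary~\ref{c:sdr5}), and for the converse compose with the projections, use that each ${\rm Ker}(p_if)$ is essential in a direct summand, identify ${\rm Ker}(f)=\bigcap_{i\in I}{\rm Ker}(p_if)$, and invoke SSIP-extending. Your extra care in justifying the identification of ${\rm Ker}(f)$ with the intersection (realized as a kernel, via uniqueness in the universal property of the product) only fills in a step the paper leaves implicit; one of your intermediate sentences about ``$f$ factors through ${\rm Ker}(f_i)$'' is garbled, but your final precise formulation is the right one.
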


\begin{proof} (1) The direct implication follows by Theorem \ref{t:sdr1}. 
Conversely, assume that $N_i$ is $M$-CS-Rickart for every $i\in I$. Let $f:M\to \prod_{i\in I} N_i$
be a morphism in $\mathcal{A}$. For every $i\in I$, denote by $p_i:\prod_{i\in I} N_i\to N_i$ the canonical projection
and $f_i=p_if:M\to N_i$. Since $N_i$ is $M$-CS-Rickart, ${\rm Ker}(f_i)$ is essential in a direct summand of $M$ 
for every $i\in I$. Since $M$ has SSIP-extending, it follows that 
${\rm Ker}(f)=\bigcap_{i\in I} {\rm Ker}(f_i)$ is essential in a direct summand of $M$. 
Hence $\prod_{i\in I} N_i$ is $M$-CS-Rickart.
\end{proof}

In general the coproduct of two (dual) self-CS-Rickart objects is not (dual) self-CS-Rickart, 
as we may see in the following example.

\begin{ex} \label{ec1} \rm (i) Consider the ring $R=\begin{pmatrix}\mathbb{Z}&\mathbb{Z}\\0&\mathbb{Z} \end{pmatrix}$
and the right $R$-modules $M_1=\begin{pmatrix}\mathbb{Z}&\mathbb{Z}\\0&0 \end{pmatrix}$ 
and $M_2=\begin{pmatrix}0&0\\0&\mathbb{Z} \end{pmatrix}$. Since $\End(M_1)\cong \mathbb{Z} \cong \End(M_2)$,
$M_1$ and $M_2$ are self-Rickart, hence $M_1$ and $M_2$ are self-CS-Rickart right $R$-modules. 
But we have seen in Example \ref{ex2} that $R=M_1\oplus M_2$ is not a self-CS-Rickart right $R$-module. 

(ii) \cite[Example~2.6]{Tribak} The $\mathbb{Z}$-modules $\mathbb{Z}_2$ and $\mathbb{Z}_{16}$ are dual self-CS-Rickart, but the $\mathbb{Z}$-module $\mathbb{Z}_2\oplus \mathbb{Z}_{16}$ is not dual self-CS-Rickart.
\end{ex}

Nevertheless, we have the following result.

\begin{theo} \label{t:pstr4} Let $\mathcal{A}$ be an abelian category, 
and let $M=\bigoplus_{i\in I}M_i$ be a direct sum decomposition in $\mathcal{A}$ 
such that $\Hom_{\mathcal{A}}(M_i,M_j)=0$ for every $i,j\in I$ with $i\neq j$. Then:
\begin{enumerate}
\item $M$ is self-CS-Rickart if and only if $M_i$ is self-CS-Rickart for each $i\in I$.
\item $M$ is  dual self-CS-Rickart if and only if $M_i$ is dual self-CS-Rickart for each $i\in I$.
\end{enumerate}
\end{theo}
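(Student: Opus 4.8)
The plan is to prove part (1) and obtain part (2) by the duality principle. The key structural fact to exploit is that, under the hypothesis $\Hom_{\mathcal{A}}(M_i,M_j)=0$ for $i\neq j$, every endomorphism $f:M\to M$ decomposes as $f=\bigoplus_{i\in I} f_i$ with $f_i:M_i\to M_i$; indeed the component of $f$ from $M_i$ into $M_j$ factors through $\Hom_{\mathcal{A}}(M_i,M_j)=0$ when $i\neq j$. Consequently $\mathrm{Ker}(f)=\bigoplus_{i\in I}\mathrm{Ker}(f_i)$, and a subobject of this form is essential in $\bigoplus_{i\in I} L_i$ (for subobjects $L_i\le M_i$) precisely when each $\mathrm{Ker}(f_i)$ is essential in $L_i$; moreover a coproduct $\bigoplus_{i\in I} L_i$ with $L_i\le M_i$ is a direct summand of $M$ exactly when each $L_i$ is a direct summand of $M_i$. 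These three observations are the backbone of the argument.

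For the forward implication I would not invoke the reduction above but simply note that each $M_i$ is a direct summand of $M$, so if $M$ is self-CS-Rickart then $M_i$ is $M_i$-CS-Rickart by Corollary~\ref{c:sdr5}(1), i.e.\ $M_i$ is self-CS-Rickart. (If one prefers a uniform treatment, Proposition~\ref{p:pr3} already gives that $M_i$ is $M_i$-CS-Rickart when $M$ is self-CS-Rickart, in the case $I$ is such that the coproduct coincides with the relevant product; but the direct-summand argument via Corollary~\ref{c:sdr5} is cleanest.) For the converse, suppose each $M_i$ is self-CS-Rickart and let $f:M\to M$ be a morphism. Using the $\Hom$-vanishing hypothesis, write $f=\bigoplus_i f_i$ as above. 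Since $M_i$ is $M_i$-CS-Rickart, there are an essential monomorphism $e_i:\mathrm{Ker}(f_i)\to L_i$ and a section $s_i:L_i\to M_i$ with $\mathrm{ker}(f_i)=s_ie_i$; equivalently $\mathrm{Ker}(f_i)$ is essential in the direct summand $L_i$ of $M_i$. Taking coproducts, $\bigoplus_i e_i:\bigoplus_i\mathrm{Ker}(f_i)\to\bigoplus_i L_i$ is an essential monomorphism, $\bigoplus_i s_i:\bigoplus_i L_i\to\bigoplus_i M_i=M$ is a section, and $\mathrm{Ker}(f)=\bigoplus_i\mathrm{Ker}(f_i)$ is essential in the direct summand $\bigoplus_i L_i$ of $M$. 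Hence $M$ is self-CS-Rickart.

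The main obstacle, and the point that needs care in the infinite case, is the claim that an arbitrary coproduct of essential monomorphisms is again an essential monomorphism, together with the claim that $\bigoplus_i L_i$ is a direct summand of $\bigoplus_i M_i$ when each $L_i$ is a direct summand of $M_i$. The latter is immediate: choosing complements $M_i=L_i\oplus L_i'$ yields $M=\bigl(\bigoplus_i L_i\bigr)\oplus\bigl(\bigoplus_i L_i'\bigr)$. For the former, I would argue that a subobject $X\le\bigoplus_i M_i$ meeting $\bigoplus_i L_i$ trivially must meet each $L_i$ trivially after composing with the projection to $M_i$, and then use the essentiality of $\mathrm{Ker}(f_i)$ in $L_i$ together with the $\Hom$-vanishing to force $X=0$; alternatively, since the question of essentiality is detected by the projections onto the $M_i$ under the $\Hom$-vanishing hypothesis, one reduces directly to the componentwise statement. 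In any case the crucial input is that, because $\Hom_{\mathcal{A}}(M_i,M_j)=0$ for $i\neq j$, subobjects and morphisms of $M$ split along the decomposition, so essentiality, sectionhood, and the direct-summand property are all tested coordinatewise.

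\textbf{Dual statement.} Part (2) follows from part (1) by applying the duality principle in abelian categories: the hypothesis $\Hom_{\mathcal{A}}(M_i,M_j)=0$ is self-dual, and ``self-CS-Rickart'' dualizes to ``dual self-CS-Rickart,'' so the dual of the argument above—with cokernels, retractions, superfluous epimorphisms and products replacing kernels, sections, essential monomorphisms and coproducts—yields the claim.
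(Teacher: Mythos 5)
Your proposal is correct and follows essentially the same route as the paper: the forward direction via Corollary~\ref{c:sdr5}, and the converse by observing that the $\Hom$-vanishing forces $f=\bigoplus_{i\in I}f_i$, so that $\mathrm{Ker}(f)=\bigoplus_{i\in I}\mathrm{Ker}(f_i)$ and the coproduct of the essential monomorphisms and sections witnessing each $M_i$ being self-CS-Rickart witnesses the same for $M$, with part (2) obtained dually. The one point you flag as delicate (a coproduct of essential monomorphisms is essential) is also used without further justification in the paper's proof, so your treatment matches it in both substance and level of detail.
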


\begin{proof} (1) Assume first that $M$ is self-CS-Rickart. 
Then $M_i$ is self-CS-Rickart for every $i\in I$ by Corollary \ref{c:sdr5}.

Conversely, assume that $M_i$ is self-CS-Rickart for every $i\in I$. 
Let $f:M\to M$ be a morphism in $\mathcal{A}$. Its associated matrix has zero entries except for 
the entries $(i,i)$ with $i\in I$, which are some morphisms $f_i:M_i\to M_i$. Then $f=\bigoplus_{i\in I}f_i$
and $K={\rm Ker}(f)=\bigoplus_{i\in I}{\rm Ker}(f_i)$. Denote $k={\rm ker}(f):K\to M$ and 
$k_i={\rm ker}(f_i):K_i\to M_i$ for each $i\in I$.
Now let $i\in I$. Since $M_i$ is self-CS-Rickart, there are an essential monomorphism 
$u_i:K_i\to U_i$ and a section $s_i:U_i\to M_i$ such that $k_i=s_iu_i$.
Then $u=\bigoplus_{i\in I}u_i:\bigoplus_{i\in I}K_i\to \bigoplus_{i\in I}U_i$ is an essential monomorphism 
and $s=\bigoplus_{i\in I}s_i:\bigoplus_{i\in I}U_i\to \bigoplus_{i\in I}M_i$ is a section. 
We also have $k=su$, which shows that $M$ is self-CS-Rickart.

(2) This follows in a similar way as (1) by using images instead of kernels.
\end{proof}

\section{Classes all of whose objects are (dual) self-CS-Rickart}

In this section we obtain several characterizations of classes all of whose objects are (dual) self-CS-Rickart, 
mainly in connection with injective, projective, extending and lifting objects.

\begin{theo} \label{t:extlif} Let $\A$ be an abelian category. 
\begin{enumerate}
\item Assume that $\A$ has enough injectives. Let $\mathcal{C}$ be a class of objects of $\A$
which is closed under binary direct sums and contains all injective objects of $\A$. Then the following are equivalent:
\begin{enumerate}[(i)]
\item Every object of $\C$ is extending.
\item Every object of $\C$ is self-CS-Rickart.
\item Every object of $\C$ has SIP-extending.
\end{enumerate}
\item Assume that $\A$ has enough projectives. Let $\mathcal{C}$ be a class of objects of $\A$
which is closed under binary direct sums and contains all projective objects of $\A$. Then the following are equivalent:
\begin{enumerate}[(i)]
\item Every object of $\C$ is lifting.
\item Every object of $\C$ is dual self-CS-Rickart.
\item Every object of $\C$ has SSP-lifting.
\end{enumerate}
\end{enumerate} 
\end{theo}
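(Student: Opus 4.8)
By the duality principle it suffices to prove (1), and within (1) the implication $(i)\Rightarrow(ii)$ is immediate from the remark following the definition of CS-Rickart objects (every extending object is self-CS-Rickart). So the whole content is the implication $(ii)\Rightarrow(i)$.

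The plan is to take an arbitrary object $M$ of $\C$ and show it is extending, i.e.\ that every subobject of $M$ is essential in a direct summand of $M$. Given a subobject $X$ of $M$, I would use the hypothesis that $\A$ has enough injectives to embed $M$ into an injective object $E$, and then embed the quotient $E/X$ (or rather a suitable construction) so as to realise $X$ as the kernel of a morphism out of an object of $\C$. Concretely: let $i:M\to E$ be a monomorphism with $E$ injective, and let $q:M\to M/X$ be the cokernel of the inclusion $X\to M$; since $E$ is injective there is no immediate obstruction, but the cleaner route is to embed $M/X$ into an injective object $E'$ and consider the composite $g:M\to M/X\to E'$, which has $\mathrm{Ker}(g)=X$. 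Then form $N = M\oplus E'$, which lies in $\C$ because $\C$ is closed under binary direct sums and contains all injectives. Consider $f = [\,0\ \ g\,]^{T}$-type data, or more precisely the morphism $f:N\to N$ (endomorphism) built so that its kernel meets $M$ exactly in $X$: one natural choice is $f:M\oplus E'\to M\oplus E'$ given by the matrix $\left[\begin{smallmatrix}0&0\\ g&0\end{smallmatrix}\right]$, whose kernel is $X\oplus E'$.

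Now apply the hypothesis that $N$ is self-CS-Rickart to this endomorphism $f$: $\mathrm{Ker}(f)=X\oplus E'$ is essential in a direct summand $D$ of $N=M\oplus E'$. The key step is then to descend this to $M$: since $E'$ is a direct summand of $N$ contained in $\mathrm{Ker}(f)$, one argues that $D = D'\oplus E'$ for a direct summand $D'$ of $M$ with $X$ essential in $D'$. This uses that $E'$ is a direct summand sitting inside $D$ together with a modular-law / direct-sum-decomposition argument of the type already used in the proofs of Lemma~\ref{l:nonsing}, Lemma~\ref{l:AB} and Theorem~\ref{t:pstr4}; the essentiality of $X$ in $D'$ follows from the essentiality of $X\oplus E'$ in $D'\oplus E'$ by the standard fact that a direct sum of essential monomorphisms (here $X\to D'$ and $1_{E'}$) is essential, read backwards. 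Hence every subobject $X$ of $M$ is essential in a direct summand of $M$, so $M$ is extending, completing $(ii)\Rightarrow(i)$. Part (2) follows by the duality principle, using enough projectives, superfluous epimorphisms, and images in place of enough injectives, essential monomorphisms, and kernels.

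The main obstacle I anticipate is the descent step: verifying that the direct summand $D$ of $M\oplus E'$ in which $X\oplus E'$ is essential actually splits as $D'\oplus E'$ with $D'\leq M$. One must be careful that $E'$, while a direct summand of the ambient object, is a direct summand \emph{of $D$} — this needs that $E'\subseteq D$ (clear, since $E'\subseteq \mathrm{Ker}(f)\subseteq D$) and that $E'$ being a direct summand of $N$ with $E'\subseteq D$ forces $E'$ to be a direct summand of $D$ (true because one can intersect the complement with $D$ using the modular law, or because injectivity of $E'$ makes the inclusion $E'\to D$ split directly). Once $D=D'\oplus E'$ is in hand, identifying $D'$ with a direct summand of $M$ and transferring essentiality is routine. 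An alternative, perhaps smoother, realisation of $X$ as a kernel avoids $E'$ and instead uses an injective envelope or a pushout; either way the same descent lemma is the crux.
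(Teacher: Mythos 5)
Your overall skeleton is the same as the paper's: realize the subobject $X\leq M$ as $X=\mathrm{Ker}(g)$ for $g:M\to M/X\hookrightarrow E'$ with $E'$ injective, note that $M\oplus E'\in\C$ because $\C$ contains injectives and is closed under binary direct sums, and use the self-CS-Rickart property of $M\oplus E'$; the implication (i)$\Rightarrow$(ii) and part (2) by duality are handled as in the paper. Where you diverge is the endgame. The paper never has to leave $M$: from $M\oplus E'$ self-CS-Rickart it gets SIP-extending (Corollary \ref{c:sdr4}) and then invokes Lemma \ref{l:AB} (one could equally use Corollary \ref{c:sdr5} applied to the summands $E'$ of $M\oplus E'$ and $M$ of $M\oplus E'$) to conclude that $E'$ is $M$-CS-Rickart, so $X=\mathrm{Ker}(g)$ is essential in a direct summand of $M$ directly. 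You instead apply the definition to the endomorphism $\left[\begin{smallmatrix}0&0\\ g&0\end{smallmatrix}\right]$ of $M\oplus E'$, whose kernel is $X\oplus E'$, and then must descend from a direct summand $D$ of $M\oplus E'$ containing $X\oplus E'$ essentially to a direct summand of $M$ containing $X$ essentially.

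That descent is where your proposal has a genuine gap, and it is exactly the part you declare routine. Two points. First, to write $D=D'\oplus E'$ with $X\subseteq D'$ you must choose the complement: extend the projection $X\oplus E'\to E'$ along the monomorphism $X\oplus E'\hookrightarrow D$ (injectivity of $E'$) and let $D'$ be the kernel of the resulting retraction $D\to E'$; an arbitrary complement of $E'$ in $D$ need not contain $X$, and your ``read backwards'' essentiality argument requires $X\subseteq D'$. Second, and more seriously, $D'$ is a direct summand of $M\oplus E'$, not of $M$, and need not be contained in $M$; in general the image of a direct summand of $M\oplus E'$ under the projection $\pi_M$ is \emph{not} a direct summand of $M$ (in $\mathbb{Z}\oplus\mathbb{Z}$ the summand generated by $(2,1)$ projects onto $2\mathbb{Z}$ in the first coordinate), so ``identifying $D'$ with a direct summand of $M$'' is precisely the non-routine step. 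It can be repaired, but only by using injectivity of $E'$ once more: since $D'\cap E'=0$ and every monomorphism from $E'$ splits, the mono from $E'$ to a complement of $D'$ splits, whence $D'+E'$ is a direct summand of $M\oplus E'$, and from $M\oplus E'=(D'\oplus E')\oplus C''$ one checks that $M=\pi_M(D')\oplus\pi_M(C'')$, with $\pi_M$ restricting to an isomorphism $D'\to\pi_M(D')$ fixing $X$, so $X$ is essential in the direct summand $\pi_M(D')$ of $M$. As written, though, the crucial claim is asserted rather than proved; the cleanest fix is to drop the endomorphism altogether and quote the paper's relative results (Corollary \ref{c:sdr5}, or Corollary \ref{c:sdr4} plus Lemma \ref{l:AB}) to get that $E'$ is $M$-CS-Rickart, which yields the conclusion without any descent.
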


\begin{proof} (1) (i)$\Rightarrow$(ii) This is clear.

(ii)$\Rightarrow$(iii) This follows by Corollary \ref{c:sdr4}.

(iii)$\Rightarrow$(i) Assume that every object of $\C$ has SIP-extending. 
Let $M$ be an object of $\mathcal{C}$. Let $N$ be a subobject of $M$, 
and denote by $p:M\to M/N$ the cokernel of the inclusion morphism $i:N\to M$,
and by $j:M/N\to E$ the inclusion into an injective object $E$ of $\A$. 
Consider the morphism $f=jp:M\to E$. Since $M\oplus E\in \C$, it has SIP-extending. 
Then $E$ is $M$-CS-Rickart by Lemma \ref{l:AB}. It follows that $N={\rm Ker}(f)$ is essential in a direct summand of $M$.
Hence $M$ is extending.
\end{proof}

\begin{coll} \label{c:extlif} Let $\A$ be an abelian category. 
\begin{enumerate}
\item The following are equivalent:
\begin{enumerate}[(i)]
\item Every object of $\A$ has an injective envelope.
\item $\A$ has enough injectives and every injective object of $\A$ is extending.
\item $\A$ has enough injectives and every injective object of $\A$ is self-CS-Rickart.
\item $\A$ has enough injectives and every injective object of $\A$ has SIP-extending.
\end{enumerate}
\item The following are equivalent:
\begin{enumerate}[(i)]
\item Every object of $\A$ has a projective cover (i.e., $\A$ is perfect).
\item $\A$ has enough projectives and every projective object of $\A$ is lifting.
\item $\A$ has enough projectives and every projective object of $\A$ is dual self-CS-Rickart.
\item $\A$ has enough projectives and every projective object of $\A$ has SSP-lifting.
\end{enumerate}
\end{enumerate} 
\end{coll}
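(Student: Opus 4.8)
The plan is to prove part~(1) directly and to deduce part~(2) from it by the duality principle in abelian categories. For part~(1), I would first dispatch the equivalence (ii)$\Leftrightarrow$(iii) as an immediate application of Theorem~\ref{t:extlif}(1): take $\C$ to be the class of all injective objects of $\A$. This class is closed under binary direct sums (a finite direct sum of injectives is injective) and trivially contains all injective objects of $\A$, so Theorem~\ref{t:extlif}(1) gives that every injective object of $\A$ is extending if and only if every injective object of $\A$ is self-CS-Rickart; appending the hypothesis ``$\A$ has enough injectives'' to both sides yields (ii)$\Leftrightarrow$(iii).

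It then remains to establish (i)$\Leftrightarrow$(ii). For (i)$\Rightarrow$(ii), I would first note that if every object of $\A$ has an injective envelope, then every object admits a monomorphism into an injective object, so $\A$ has enough injectives. Next, given an injective object $E$ and a subobject $N$ of $E$, let $e:N\to E(N)$ be an injective envelope of $N$; since $E$ is injective, the inclusion $N\to E$ factors as $ge$ for some $g:E(N)\to E$, and since $e$ is essential while $ge$ is a monomorphism, $g$ is a monomorphism. As $E(N)$ is injective, $g$ is a section, so $E(N)$ is isomorphic to a direct summand of $E$ in which $N$ is essential; hence $E$ is extending.

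For (ii)$\Rightarrow$(i), let $X$ be an arbitrary object of $\A$. Since $\A$ has enough injectives, there is a monomorphism $X\to E$ with $E$ injective; since $E$ is extending, $X$ is essential in a direct summand $D$ of $E$. A direct summand of an injective object is injective, so the induced morphism $X\to D$ is an essential monomorphism into an injective object, i.e.\ an injective envelope of $X$. This proves (i) and closes the argument for part~(1). Part~(2) then follows by replacing each notion by its dual (injective envelope by projective cover, injective object by projective object, extending by lifting, self-CS-Rickart by dual self-CS-Rickart), invoking the duality principle in abelian categories together with Theorem~\ref{t:extlif}(2).

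I expect no genuine obstacle here; the only step requiring a little care is the extension-and-splitting argument in (i)$\Rightarrow$(ii), where one identifies the injective envelope $E(N)$ with a direct summand of $E$. This rests on two standard facts about abelian categories that I would state explicitly: an injective subobject is a direct summand (apply injectivity to the identity along the monomorphism), and a direct summand of an injective object is injective.
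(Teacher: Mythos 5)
Your proposal is correct, and for the part that is genuinely new in this paper it coincides with the paper's proof: the equivalence (ii)$\Leftrightarrow$(iii) is obtained exactly as you do, by applying Theorem~\ref{t:extlif}(1) with $\C$ the class of injective objects (closed under binary direct sums and containing all injectives), and part (2) is obtained by the duality principle. The only divergence is in (i)$\Leftrightarrow$(ii): the paper simply cites the dual of \cite[Theorem~3.5]{CK18} for the equivalence between ``every object has an injective envelope'' and ``enough injectives plus every injective object is extending,'' whereas you prove it directly. Your direct argument is sound: for (i)$\Rightarrow$(ii) you extend the inclusion $N\to E$ along the essential monomorphism $N\to E(N)$ using injectivity of $E$, deduce that the extension $g:E(N)\to E$ is a monomorphism from essentiality, and split it since an injective subobject is a direct summand, so $N$ is essential in the direct summand ${\rm Im}(g)$; for (ii)$\Rightarrow$(i) you embed $X$ in an injective $E$, use the extending property to place $X$ essentially in a direct summand $D$, and note $D$ is injective, so $X\to D$ is an injective envelope. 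Since the whole argument is arrow-theoretic (essential monomorphisms, sections, injectives), it dualizes cleanly to give part (2) with projective covers, lifting and dual self-CS-Rickart objects, as you indicate. What your route buys is self-containedness: the reader does not need the external result from \cite{CK18}; what the paper's route buys is brevity and consistency with how the semiperfect analogue (Corollary~\ref{c:extlif2}) is handled, where the same CK18 argument is adapted rather than reproved.
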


\begin{proof} (1) This follows by Theorem \ref{t:extlif} with $\C$ the class of injective objects of $\A$, 
and by the dual of \cite[Theorem~3.5]{CK18}.
\end{proof}

Note that every object of a Grothendieck category $\A$ has an injective envelope, so every injective object of $\A$ 
is extending, and consequently self-CS-Rickart, by Corollary \ref{c:extlif}. 

As consequences of Theorem \ref{t:extlif} and Corollary \ref{c:extlif} for module categories we obtain the following partially known results (see \cite[Lemmas~5, 11]{AN1}).

\begin{coll} The following are equivalent for a unitary ring $R$ with Jacobson radical $J(R)$:
\begin{enumerate}[(i)]
\item Every right $R$-module is extending.
\item Every right $R$-module is self-CS-Rickart.
\item Every right $R$-module has SIP-extending.
\item Every right $R$-module is lifting.
\item Every right $R$-module is dual self-CS-Rickart.
\item Every right $R$-module has SSP-lifting.
\item $R$ is a left and right artinian serial ring with $(J(R))^2=0$. 
\end{enumerate}
\end{coll}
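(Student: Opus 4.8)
The plan is to establish the equivalence by combining the module-theoretic characterizations already available with the categorical results of this paper, specialized to the category $\mathrm{Mod}\text{-}R$, which is a Grothendieck category with enough injectives and enough projectives. First I would note that the equivalences (i)$\Leftrightarrow$(ii) and (iii)$\Leftrightarrow$(iv) are immediate from Theorem~\ref{t:extlif}: taking $\A=\mathrm{Mod}\text{-}R$ and $\C$ the class of all right $R$-modules (which is trivially closed under binary direct sums and contains all injectives and all projectives), part~(1) of that theorem gives ``every module is extending $\Leftrightarrow$ every module is self-CS-Rickart'', and part~(2) gives ``every module is lifting $\Leftrightarrow$ every module is dual self-CS-Rickart''.

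Next I would handle the equivalence (i)$\Leftrightarrow$(iii), that every right $R$-module is extending if and only if every right $R$-module is lifting. This is a known result; the cleanest route is to invoke the classical theorem (due to, e.g., the structure theory of rings over which every module is extending, cf.\ the monograph \cite{CLVW}) that each of these conditions is equivalent to $R$ being a two-sided artinian serial ring with $(J(R))^2=0$, i.e.\ to condition~(v). Thus I would prove the cycle by showing (i)$\Leftrightarrow$(v) and (iii)$\Leftrightarrow$(v) separately, each of which is the content of a classical characterization: a ring over which every module is extending (respectively lifting) is precisely an artinian serial ring whose Jacobson radical squares to zero. Since both of these module-theoretic facts are already in the literature, the contribution here is simply to splice them together with the categorical equivalences proved above.

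Putting it together, the argument runs: (i)$\Leftrightarrow$(ii) and (iii)$\Leftrightarrow$(iv) by Theorem~\ref{t:extlif}; (i)$\Leftrightarrow$(v) and (iii)$\Leftrightarrow$(v) by the classical structure theorems for rings all of whose modules are extending, respectively lifting. This closes the loop through all five conditions. One should double-check that Theorem~\ref{t:extlif} applies with $\C$ the class of \emph{all} modules — it does, since the only hypotheses are closure under binary direct sums and containment of the injectives (resp.\ projectives), both automatic — and that $\mathrm{Mod}\text{-}R$ has enough injectives and enough projectives, which it does.

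The main obstacle is not in the categorical machinery, which does its job cleanly, but in correctly citing and, if necessary, sketching the classical equivalence (i)$\Leftrightarrow$(iii)$\Leftrightarrow$(v): one must be careful that ``every module extending'' forces the ring to be both left and right artinian and serial with $(J(R))^2=0$, and symmetrically for ``every module lifting''. These are nontrivial structure theorems, and the proof proposal would either quote them from \cite{CLVW} or indicate the standard reduction: every module extending implies every module is a direct sum of uniform modules with a chain condition, which pins down the serial artinian structure, and the condition $(J(R))^2=0$ comes from analyzing the extending property on specific small modules such as $R/J^2$-type constructions. Since the statement is flagged as \cite[Lemma~2.6]{AN}, the honest and efficient thing is to present it as a corollary, deriving (ii) and (iv) from the present paper's Theorem~\ref{t:extlif} and referring to \cite{AN} together with the classical literature for the remaining equivalences.
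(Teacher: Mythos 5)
Your proposal matches the paper's proof: the paper likewise applies Theorem~\ref{t:extlif} with $\C={\rm Mod}(R)$ (which has enough injectives and projectives) to get (i)$\Leftrightarrow$(ii) and (iii)$\Leftrightarrow$(iv), and then cites the classical characterization \cite[29.10]{CLVW} for the equivalence of (i), (iii) and (v). So the argument is correct and essentially identical in structure.
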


\begin{proof} The category ${\rm Mod}(R)$ of unitary right $R$-modules has enough injectives and enough projectives. 
Then the conclusion follows by Theorem \ref{t:extlif} with $\C={\rm Mod}(R)$ and by \cite[29.10]{CLVW}.
\end{proof}

\begin{coll} \label{c:perfect} The following are equivalent for a unitary ring $R$:
\begin{enumerate}[(i)]
\item $R$ is right perfect.
\item Every projective right $R$-module is lifting.
\item Every projective right $R$-module is dual self-CS-Rickart.
\item Every projective right $R$-module has SSP-lifting.
\end{enumerate}
\end{coll}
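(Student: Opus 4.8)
The plan is to deduce Corollary \ref{c:perfect} directly from the machinery already assembled, namely Corollary \ref{c:extlif} and Corollary \ref{c:extlif2}, applied to the Grothendieck category $\A={\rm Mod}(R)$ of unitary right $R$-modules. The key observation is that ${\rm Mod}(R)$ has enough projectives (the free modules are projective), that it is a Grothendieck category, and that it possesses a finitely generated projective generator, namely $R$ itself; hence both corollaries apply with no extra hypotheses.

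First I would treat the perfect case. By definition $R$ is right perfect if and only if every right $R$-module has a projective cover, which is exactly the statement that the category ${\rm Mod}(R)$ is perfect in the sense used in Corollary \ref{c:extlif}(2). Applying Corollary \ref{c:extlif}(2) to $\A={\rm Mod}(R)$, the three conditions there — ${\rm Mod}(R)$ perfect; ${\rm Mod}(R)$ has enough projectives and every projective module is lifting; ${\rm Mod}(R)$ has enough projectives and every projective module is dual self-CS-Rickart — are equivalent. Since ${\rm Mod}(R)$ always has enough projectives, this yields the equivalence of the non-parenthetical versions of (i), (ii), (iii).

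Next I would treat the semiperfect case. Here I invoke Corollary \ref{c:extlif2} with $\A={\rm Mod}(R)$, using that $R$ is a finitely generated projective generator of ${\rm Mod}(R)$. That corollary gives the equivalence of: $R$ is semiperfect (every finitely generated right $R$-module has a projective cover); every finitely generated projective right $R$-module is lifting; every finitely generated projective right $R$-module is dual self-CS-Rickart. These are precisely the parenthetical versions of (i), (ii), (iii), so the proof is complete by combining the two blocks of equivalences.

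I do not anticipate a genuine obstacle here, since the corollary is essentially a specialization of the two preceding categorical corollaries to the module setting; the only point requiring a word of care is the bookkeeping that ${\rm Mod}(R)$ genuinely satisfies the hypotheses of both Corollary \ref{c:extlif} and Corollary \ref{c:extlif2} — Grothendieck category, enough projectives, and a finitely generated projective generator — and that "perfect" and "semiperfect" for ${\rm Mod}(R)$ unwind to the classical ring-theoretic notions. The mild redundancy that the first equivalence could also be read off from Corollary \ref{c:extlif2} by taking all of ${\rm Mod}(R)$ is worth noting but not needed. Thus the proof will simply be: apply Corollary \ref{c:extlif}(2) and Corollary \ref{c:extlif2} to ${\rm Mod}(R)$.
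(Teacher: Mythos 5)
Your proposal is correct and matches the paper's own proof, which deduces the corollary exactly by applying Corollary \ref{c:extlif} (for the perfect case) and Corollary \ref{c:extlif2} (for the semiperfect case) to ${\rm Mod}(R)$; your verification that ${\rm Mod}(R)$ satisfies the hypotheses (enough projectives, Grothendieck, $R$ a finitely generated projective generator) is the same bookkeeping the paper leaves implicit.
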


\begin{proof} This follows by Corollary \ref{c:extlif}.
\end{proof}

\begin{ex} \rm Consider the ring $R=\begin{pmatrix} \mathbb{Q}&0\\\mathbb{R}&\mathbb{R} \end{pmatrix}$. 
Since $R$ is right perfect (e.g., see \cite[Corollary~2.6]{HV}), 
$R$ is a dual self-CS-Rickart right $R$-module by Corollary \ref{c:perfect}. 
\end{ex}

Recall that a coalgebra $C$ over a field is called \emph{right perfect} if every right $C$-comodule has a projective cover \cite{CS}. 

\begin{coll} The following are equivalent for a coalgebra $C$ over a field:
\begin{enumerate}[(i)]
\item $C$ is right perfect.
\item $C$ is right semiperfect and every projective right $C$-comodule is lifting.
\item $C$ is right semiperfect and every projective right $C$-comodule is dual self-CS-Rickart.
\item $C$ is right semiperfect and every projective right $C$-comodule has SSP-lifting.
\end{enumerate}
\end{coll}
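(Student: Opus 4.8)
The plan is to reduce the coalgebra statement to the already-established characterization of right perfect (semiperfect) rings via the dual algebra, together with the module--comodule correspondence. The key point is that for a coalgebra $C$ over a field $K$, the category $\mathrm{Comod}\text{-}C$ of right $C$-comodules is a Grothendieck category with a family of finitely generated projective generators precisely when... actually, this is not automatic, so instead I would work directly inside the Grothendieck category $\A = \mathrm{Comod}\text{-}C$ and invoke Corollary~\ref{c:extlif2}. First I would recall that $\mathrm{Comod}\text{-}C$ is a Grothendieck category (it has exact direct limits, a generator given by $C$ itself, arbitrary coproducts), so the machinery of Section~5 applies. The subtlety is that $\mathrm{Comod}\text{-}C$ need not have enough projectives in general, which is exactly why the full strength of Corollary~\ref{c:perfect} cannot be quoted verbatim; the right semiperfectness hypothesis in (ii) and (iii) is precisely what compensates.

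The main steps, in order, would be: (1) observe that $C$ right perfect means every right $C$-comodule has a projective cover, while $C$ right semiperfect means every \emph{finitely generated} right $C$-comodule has a projective cover, and by \cite{CS} (or Lin's theorem) the latter is equivalent to $\mathrm{Comod}\text{-}C$ having enough projectives; (2) under right semiperfectness, apply Theorem~\ref{t:extlif}(2) with $\A = \mathrm{Comod}\text{-}C$ and $\C$ the class of projective right $C$-comodules to get the equivalence of ``every projective right $C$-comodule is lifting'' and ``every projective right $C$-comodule is dual self-CS-Rickart'', which handles (ii)$\Leftrightarrow$(iii); (3) for (i)$\Leftrightarrow$(ii), use that a right perfect coalgebra is automatically right semiperfect, and then invoke the comodule analogue of Bass's theorem: $C$ is right perfect iff $C$ is right semiperfect and every projective right $C$-comodule is (semi)perfect in the sense of lifting, which is the content of the dual of \cite[Theorem~3.5]{CK18} transported to $\mathrm{Comod}\text{-}C$, exactly as in Corollary~\ref{c:extlif}(2).

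Concretely I would write: ``Since $\mathrm{Comod}\text{-}C$ is a Grothendieck category, and $C$ is right semiperfect if and only if $\mathrm{Comod}\text{-}C$ has enough projectives (see \cite{CS}), the equivalences (ii)$\Leftrightarrow$(iii) follow from Theorem~\ref{t:extlif}(2) applied with $\C$ the class of projective right $C$-comodules. For (i)$\Leftrightarrow$(ii): a right perfect coalgebra is right semiperfect, and under this hypothesis the argument of \cite[Theorem~3.5]{CK18} (or its dual) shows that $C$ is right perfect if and only if every projective right $C$-comodule is lifting, as in Corollary~\ref{c:extlif}(2).'' The routine verifications—that $\mathrm{Comod}\text{-}C$ is Grothendieck, that binary direct sums of projectives are projective, that the class of projectives contains all projectives trivially—are immediate and I would not belabor them.

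The hard part will be pinning down the precise reference for the equivalence ``$C$ right semiperfect $\iff$ $\mathrm{Comod}\text{-}C$ has enough projectives'' and for the comodule version of Bass's perfectness theorem, since the excerpt only cites \cite{CS} for the definition and \cite{CK18} for the ring/Grothendieck-category statement; one must be confident that \cite[Theorem~3.5]{CK18} was stated at the level of Grothendieck categories (as Corollary~\ref{c:extlif2} suggests) so that it transfers to $\mathrm{Comod}\text{-}C$ without needing enough projectives globally. If that transfer is not clean, the fallback is to restrict attention throughout to the finitely generated setting and the subcategory they generate, mirroring the proof of Corollary~\ref{c:extlif2} line by line.
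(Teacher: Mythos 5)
Your proposal is correct and follows essentially the same route as the paper: identify right semiperfectness of $C$ with the category of right $C$-comodules having enough projectives (the paper cites \cite[Theorem~3.2.3]{DNR} for this), and then apply the already-established abelian-category result (Corollary~\ref{c:extlif}(2), i.e.\ Theorem~\ref{t:extlif}(2) together with the dual of \cite[Theorem~3.5]{CK18}) to that Grothendieck category. Your worry about transporting \cite[Theorem~3.5]{CK18} is moot, since Corollary~\ref{c:extlif} is stated for arbitrary abelian categories and can be cited directly, exactly as the paper does.
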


\begin{proof} The category of right $C$-comodules is a Grothendieck category \cite{DNR}, hence it has enough injectives.
It has enough projectives if and only if $C$ is right semiperfect \cite[Theorem~3.2.3]{DNR}. Then use Corollary \ref{c:extlif}.
\end{proof}

As in the module-theoretic case (e.g., see \cite[8.1]{CLVW} and \cite[Section~4]{DHSW}), we may consider the following notions.  

\begin{defn} \rm An object $M$ of an abelian category $\A$ is called:
\begin{enumerate} 
\item \emph{singular} if $M\cong L/K$ for some object $L$ of $\A$ and essential subobject $K$ of $L$. 
\item \emph{small} if $M$ is a superfluous subobject of some object $L$ of $\A$. 
\end{enumerate}
\end{defn}

Following \cite{AN1}, we generalize the concept of weakly (semi)hereditary module to abelian categories.

\begin{defn} \rm An object $M$ of an abelian category $\A$ is called: 
\begin{enumerate} 
\item \emph{weakly (semi)hereditary} if every (finitely generated) subobject of $M$ is the direct sum of 
a singular subobject and a projective subobject.
\item \emph{weakly (semi)cohereditary} if every (finitely cogenerated) factor object of $M$ is the direct sum of 
a small subobject and an injective subobject.
\end{enumerate}
\end{defn}

\begin{theo} \label{t:whc} Let $\A$ be a Grothendieck category. 
\begin{enumerate}
\item Assume that $\A$ has a family of finitely generated projective generators. 
Then the following are equivalent:
\begin{enumerate}[(i)]
\item Every (finitely generated) projective object of $\A$ is weakly (semi)hereditary.
\item Every (finitely generated) projective object of $\A$ is self-CS-Rickart.
\item Every (finitely generated) projective object of $\A$ has SIP-extending.
\end{enumerate}
\item Assume that $\A$ is locally finitely generated. Then the following are equivalent:
\begin{enumerate}[(i)]
\item Every (finitely cogenerated) injective object of $\A$ is weakly (semi)cohereditary.
\item Every (finitely cogenerated) injective object of $\A$ is dual self-CS-Rickart.
\item Every (finitely cogenerated) injective object of $\A$ has SSP-lifting.
\end{enumerate}
\end{enumerate} 
\end{theo}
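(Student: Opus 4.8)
The plan is to prove part (1), since part (2) follows by the duality principle in abelian categories, exactly as the paper handles all such dualizations. Within part (1), the implication (i)$\Rightarrow$(ii) should be essentially immediate: a weakly (semi)hereditary object has every (finitely generated) subobject expressed as a direct sum of a singular subobject and a projective subobject, and I would argue that this forces the relevant kernels to be essential in direct summands. Concretely, given a projective object $P$ of $\A$ and a morphism $f:P\to P$, I would like to show ${\rm Ker}(f)$ is essential in a direct summand of $P$; writing ${\rm Ker}(f)=S\oplus Q$ with $S$ singular and $Q$ projective, the projective summand $Q$ splits off inside $P$ (using that $P$ is projective, so any projective subobject whose quotient embeds appropriately is a direct summand — this needs the finitely generated hypothesis and a small diagram chase), and $S$ being singular means $S$ is essential in the complement, giving ${\rm Ker}(f)$ essential in a direct summand. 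I expect this direction to require care precisely at the point of splitting off $Q$: one wants that a projective subobject of a projective object, sitting as a summand of a kernel, is itself a direct summand of the ambient object, and the finitely generated projective generator hypothesis is what makes this go through.

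For (ii)$\Rightarrow$(i), suppose every (finitely generated) projective object of $\A$ is self-CS-Rickart, and let $P$ be a (finitely generated) projective object; I must show $P$ is weakly (semi)hereditary. Let $N$ be a (finitely generated) subobject of $P$. Since $\A$ has a family of finitely generated projective generators, I can find a (finitely generated) projective object $Q$ and an epimorphism $g:Q\to P/N$, which lifts along $p:P\to P/N$ to a morphism $\tilde g:Q\to P$ (projectivity of $Q$) — actually I would rather build a morphism whose kernel is exactly $N$. The cleaner route: form the morphism $f:P\to P/N \hookrightarrow$ nothing useful since $P/N$ need not be projective. Instead, consider the projective cover-type construction: take a (finitely generated) projective $Q$ with epimorphism $h:Q\twoheadrightarrow P/N$, and consider $f = (h\ \text{composed appropriately}):$ more precisely, let $f:P\oplus Q\to P\oplus Q$ be chosen so that its kernel is controlled. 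The simplest correct approach is to use Lemma \ref{l:AB} and Corollary \ref{c:sdr4}: since $P\oplus Q$ is projective, hence self-CS-Rickart by hypothesis, it has SIP-extending, so $Q$ is $P$-CS-Rickart by Lemma \ref{l:AB}; pick a morphism $f:P\to Q$ lifting the composite $P\to P/N$ followed by a section-like map into $Q$ — here I need $P/N$ to embed into something, so I would instead take $Q$ projective surjecting onto $P/N$ and use that $P$ is projective to get $f:P\to Q$ with $hf = p$, whence ${\rm Ker}(f)\subseteq N$; then analyze $N/{\rm Ker}(f)$.

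The main obstacle, and where I would spend the most effort, is organizing the (ii)$\Rightarrow$(i) argument so that the subobject $N$ genuinely decomposes as singular $\oplus$ projective. The idea is: ${\rm Ker}(f)$ is essential in a direct summand $D$ of $P$ (self-CS-Rickart property transported via Lemma \ref{l:AB}), write $P = D\oplus D'$; then $N\cap D'$ maps isomorphically (or monomorphically with small cokernel) and one extracts a projective piece, while the part of $N$ inside $D$ contains ${\rm Ker}(f)$ essentially and the quotient is singular. Pinning down that $N$ itself — not just a subquotient — splits as required will need the modularity of the subobject lattice in a Grothendieck category together with the essentiality of ${\rm Ker}(f)$ in $D$; this is the kind of lattice-theoretic bookkeeping that the module-theoretic source \cite{AN} handles and which must be transcribed faithfully into the categorical setting. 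The finitely generated / finitely cogenerated parenthetical variants should run in parallel, using the finitely generated projective generator hypothesis to stay within the relevant subclass at each step, and Corollary \ref{c:extlif2}-style adaptations where products of generators are needed.
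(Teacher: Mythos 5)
Your proposal has genuine gaps in both directions, and in each case the missing ingredient is the same: the decomposition singular~$\oplus$~projective has to be applied to the \emph{image} (equivalently, to the quotient $P/{\rm Ker}(f)$ or $P/{\rm Ker}(p)$), not chased inside the kernel or inside the ambient object. For (i)$\Rightarrow$(ii) you decompose ${\rm Ker}(f)=S\oplus Q$ and assert that ``$S$ being singular means $S$ is essential in the complement''; this is not a valid inference -- a singular subobject need not be essential in anything -- and no amount of splitting off $Q$ repairs it. The paper's argument instead writes ${\rm Im}(f)=Q\oplus Z$ with $Q$ singular and $Z$ projective (legitimate, since ${\rm Im}(f)$ is a (finitely generated) subobject of the weakly (semi)hereditary object $M$), lets $L$ be the preimage of $Q$ in $M$, observes that $0\to L\to M\to Z\to 0$ splits because $Z$ is projective (so $L$ is a direct summand of $M$), and then concludes that ${\rm Ker}(f)$ is essential in $L$ from the fact that $L$ is projective and $L/{\rm Ker}(f)\cong Q$ is singular (\cite[Proposition~4.5]{DHSW}, whose proof works in any Grothendieck category). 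That last fact -- singular quotient of a projective forces the kernel to be essential -- is the key lemma your sketch never invokes, and without it the essentiality you need has no source.

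For (ii)$\Rightarrow$(i) your construction points the auxiliary projective at the wrong target: you take $Q$ surjecting onto $P/N$ and then hope to recover a decomposition of $N$ by modular-lattice bookkeeping with $N\cap D'$ inside $P$, which you yourself flag as unfinished; there is no evident way to make $N$ itself split from that data. The paper's route avoids this entirely: choose a (finitely generated) projective $P'$ with an epimorphism $p:P'\to N$ (this is where the family of finitely generated projective generators is used), set $f=ip:P'\to M$ where $i:N\to M$ is the inclusion; since $P'\oplus M$ is (finitely generated) projective it is self-CS-Rickart by hypothesis, hence has SIP-extending by Corollary~\ref{c:sdr4}, so $M$ is $P'$-CS-Rickart by Lemma~\ref{l:AB}, and therefore ${\rm Ker}(f)$ is essential in a direct summand $L$ of $P'$. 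Writing $P'=L\oplus L'$, the induced diagram gives $N={\rm Im}(f)\cong L/{\rm Ker}(f)\,\oplus\, P'/L$, where $L/{\rm Ker}(f)$ is singular (as ${\rm Ker}(f)$ is essential in $L$) and $P'/L\cong L'$ is projective; the decomposition of $N$ falls out of the pushout/splitting, with no lattice argument inside $M$ needed. Your appeal to duality for part (2) matches the paper in spirit, but note the paper also records why the dual argument has the needed supply of injectives: a locally finitely generated Grothendieck category has an injective cogenerator $\bigoplus_S E(S)$.
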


\begin{proof} (1) (i)$\Rightarrow$(ii) Assume that every (finitely generated) projective object of $\A$ is weakly (semi)hereditary.
Let $M$ be a (finitely generated) projective object of $\A$. 
Let $f:M\to M$ be a morphism in $\A$. If $M$ is finitely generated, then so is ${\rm Im}(f)$. 
It follows that ${\rm Im}(f)=Q\oplus Z$ for some singular object $Q$ and projective object $Z$. 
Then we have the following induced commutative diagram
$$\SelectTips{cm}{}
\xymatrix{
 & & 0 \ar[d] & 0 \ar[d] & \\
0 \ar[r] & {\rm Ker}(f) \ar[r] \ar@{=}[d] & L \ar[r] \ar[d] & Q \ar[d] \ar[r] & 0 \\ 
0 \ar[r] & {\rm Ker}(f) \ar[r] & M \ar[r] \ar[d] & {\rm Im}(f) \ar[r] \ar[d] & 0 \\
 & & Z \ar@{=}[r] \ar[d] & Z \ar[d] & \\
 & & 0 & 0 & 
}$$
with exact rows and columns. Since $Z$ is projective, the middle vertical short exact sequence splits. 
Since $Q$ is singular and $L$ is projective, it follows that ${\rm Ker}(f)$ is essential in $L$ 
by \cite[Proposition~4.5]{DHSW}, whose proof is valid in any Grothendieck category.
Hence $M$ is self-CS-Rickart.

(ii)$\Rightarrow$(iii) This follows by Corollary \ref{c:sdr4}.

(iii)$\Rightarrow$(i) Assume that every (finitely generated) projective object of $\A$ has SIP-extending.
Let $M$ be a (finitely generated) projective object of $\mathcal{A}$. Let $N$ be a (finitely generated) subobject of $M$,
and denote by $i:N\to M$ the inclusion morphism. 
Since $\A$ has a family of finitely generated projective generators, 
there exists an epimorphism $p:P\to N$ for some (finitely generated) projective object $P$.
Consider the morphism $f=ip:P\to M$. Then $P\oplus M$ is a (finitely generated) projective object, hence it has SIP-extending by hypothesis. Then ${\rm Ker}(f)$ is essential in a direct summand $L$ of $P$ 
by Lemma \ref{l:AB}. Then we have an induced commutative diagram with exact rows and columns 
as in the proof of (i)$\Rightarrow$(ii), with $M$ replaced by $P$. Since the right-upper square is a pushout and 
the middle vertical short exact sequence splits, it follows that $N={\rm Im}(f)\cong Q\oplus Z$. 
Here $Q\cong L/{\rm Ker}(f)$ is singular and $Z$ is projective, because so is $P$. 
Hence $M$ is weakly (semi)hereditary.

(2) This follows in a dual manner as (1). We only point out that, since $\A$ is locally finitely generated Grothendieck, 
it has an injective cogenerator, namely $\bigoplus_S E(S)$, 
where $S$ runs over the isomorphism types of simple objects of $\A$ and 
$E(S)$ is the injective envelope of $S$ \cite[Lemma~E.1.11]{Prest}.
\end{proof} 
 
The following corollary extends a part of \cite[Theorem~8]{AN1}. Also, compare it with \cite[Theorems~3.11, 3.12]{ANQ} for nonsingular right (semi)hereditary rings. Note that a right weakly hereditary ring 
is the same as a right $\Sigma$-extending ring (or right co-$H$-ring) \cite[Corollary~11.13]{DHSW}.  
 
\begin{coll} \label{c:whc} Let $R$ be a unitary ring.
\begin{enumerate}
\item The following are equivalent:
\begin{enumerate}[(i)]
\item $R$ is right weakly (semi)hereditary.
\item Every (finitely generated) projective right $R$-module is weakly (semi)hereditary.
\item Every (finitely generated) projective right $R$-module is self-CS-Rickart.
\item Every (finitely generated) projective right $R$-module has SIP-extending.
\end{enumerate}
\item The following are equivalent:
\begin{enumerate}[(i)]
\item Every (finitely cogenerated) injective right $R$-module is weakly (semi)cohereditary.
\item Every (finitely cogenerated) injective right $R$-module is dual self-CS-Rickart.
\item Every (finitely cogenerated) injective right $R$-module has SSP-lifting.
\end{enumerate}
\end{enumerate} 
\end{coll}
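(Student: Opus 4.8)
The plan is to obtain the statement by specializing Theorem~\ref{t:whc} to the category $\mathrm{Mod}(R)$ of unitary right $R$-modules, which is a locally finitely generated Grothendieck category having $R$ itself as a finitely generated projective generator; thus both parts of Theorem~\ref{t:whc} are available with $\mathcal{A}=\mathrm{Mod}(R)$. Under this specialization, Corollary~\ref{c:whc}(2) is literally Theorem~\ref{t:whc}(2) rephrased in $\mathrm{Mod}(R)$, so there is nothing further to prove for part (2). Similarly, Theorem~\ref{t:whc}(1) rephrased in $\mathrm{Mod}(R)$ yields at once the equivalence (ii)$\Leftrightarrow$(iii) of Corollary~\ref{c:whc}(1); hence the only thing requiring a separate argument is the equivalence of (i) with (ii) in part (1).

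Since $R$ is itself a (finitely generated) projective right $R$-module, (ii)$\Rightarrow$(i) is trivial. For (i)$\Rightarrow$(ii) I would route the argument through the extending property: by \cite[Corollary~11.13]{DHSW}, $R$ is right weakly (semi)hereditary if and only if every (finitely generated) free right $R$-module is extending (the right $\Sigma$-extending, equivalently right co-$H$-ring, condition). Now every (finitely generated) projective right $R$-module is a direct summand of a (finitely generated) free one, and a direct summand of an extending object of a Grothendieck category is again extending; so, assuming (i), every (finitely generated) projective right $R$-module is extending, hence self-CS-Rickart. This is (iii), and Theorem~\ref{t:whc}(1) then returns (ii); together with the trivial (ii)$\Rightarrow$(i) this closes the cycle.

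The one step that is not completely immediate is precisely this passage from the hypothesis on $R_R$ alone to the corresponding statement about \emph{all} (finitely generated) projective modules; I handle it via the $\Sigma$-extending characterization of \cite[Corollary~11.13]{DHSW} together with the closure of the class of extending objects under direct summands, both of which are classical facts whose proofs remain valid in a Grothendieck category. Everything else in the corollary is a routine transcription of Theorem~\ref{t:whc} into $\mathrm{Mod}(R)$, so no additional work is needed.
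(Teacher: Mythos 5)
Your reduction of part (2) and of the equivalence (ii)$\Leftrightarrow$(iii) in part (1) to Theorem~\ref{t:whc} applied to $\mathrm{Mod}(R)$ is exactly the intended specialization, and (ii)$\Rightarrow$(i) is indeed trivial. The gap is in your argument for (i)$\Rightarrow$(ii)/(iii). The equivalence you attribute to \cite[Corollary~11.13]{DHSW} --- that $R$ is right weakly (semi)hereditary if and only if every (finitely generated) free right $R$-module is extending --- is not available, and in the direction you need it is false. That reference (and the remark preceding the corollary in the paper) concerns only the un-parenthesized, $\Sigma$-extending/co-$H$ situation and says nothing about the semihereditary/finitely generated version; worse, the implication ``weakly (semi)hereditary $\Rightarrow$ (finitely generated) free modules are extending'' fails. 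For the hereditary version: $\mathbb{Z}$ is hereditary, hence weakly hereditary, yet the free module $\mathbb{Z}^{(\mathbb{N})}$ is not extending (choose a pure subgroup $K$ with $\mathbb{Z}^{(\mathbb{N})}/K\cong\mathbb{Q}$; if $K$ were essential in a direct summand $D$, then $D/K$ would be torsion and embed in $\mathbb{Q}$, forcing $D=K$ and hence $\mathbb{Q}$ to be a summand of a free group, a contradiction). For the parenthetical version: the ring $R$ of eventually constant sequences in $\prod_{\mathbb{N}}\mathbb{F}_2$ is von Neumann regular, hence weakly semihereditary, but $R_R$ itself is not extending (the ideal generated by the even-indexed primitive idempotents is not essential in any $eR$). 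So your route proves too much: it would make every (finitely generated) projective module over such rings extending, whereas the corollary only claims the strictly weaker self-CS-Rickart property.

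The genuine content of (i)$\Rightarrow$(ii) is the passage from the hypothesis on $R_R$ to all (finitely generated) projectives at the level of weak (semi)hereditariness itself --- a Kaplansky-type argument on submodules of free modules, which is the module-theoretic part of \cite[Theorem~4.4]{AN} that the paper quotes (the corollary is stated as an extension of it), with Theorem~\ref{t:whc} then supplying (ii)$\Leftrightarrow$(iii). You either need to reproduce that argument (show that if every (finitely generated) right ideal is a direct sum of a singular and a projective module, then the same holds for (finitely generated) submodules of (finitely generated) free modules, and that the property passes to direct summands), or cite it; routing the step through the extending property cannot work.
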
 

\begin{ex} \rm Let $Q$ be a local QF-ring with Jacobson radical $J(Q)$,
and consider the ring $R=\begin{pmatrix} Q&Q\\J(Q)&Q \end{pmatrix}$. 
Since $R$ is right weakly hereditary \cite[Theorem~5.5]{Oshiro}, 
$R$ is a self-CS-Rickart right $R$-module by Corollary \ref{c:whc}. 
\end{ex}
 
Recall that a coalgebra $C$ over a field is called \emph{semiperfect}
if every finitely generated right $C$-comodule has a projective cover \cite{DNR}. 

\begin{coll} \label{c:whc-coalg} Let $C$ be a coalgebra over a field.
\begin{enumerate}
\item Assume that $C$ is left and right semiperfect. Then the following are equivalent:
\begin{enumerate}[(i)]
\item Every (finitely generated) projective right $C$-comodule is weakly (semi)hereditary.
\item Every (finitely generated) projective right $C$-comodule is self-CS-Rickart.
\item Every (finitely generated) projective right $C$-comodule has SIP-extending.
\end{enumerate}
\item The following are equivalent:
\begin{enumerate}[(i)]
\item Every (finitely cogenerated) injective right $C$-comodule is weakly (semi)cohereditary.
\item Every (finitely cogenerated) injective right $C$-comodule is dual self-CS-Rickart.
\item Every (finitely cogenerated) injective right $C$-comodule has SSP-lifting.
\end{enumerate}
\end{enumerate} 
\end{coll}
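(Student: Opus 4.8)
The plan is to obtain both parts as direct applications of Theorem \ref{t:whc} to the Grothendieck category $\mathcal{M}^C$ of right $C$-comodules. The starting point is the standard structure theory from \cite{DNR}: $\mathcal{M}^C$ is a locally finite, and in particular a locally finitely generated, Grothendieck category. Hence it automatically has enough injectives and, being locally finite, it possesses the injective cogenerator $\bigoplus_S E(S)$ (with $S$ ranging over the isomorphism types of simple objects) that is used in the proof of Theorem \ref{t:whc}(2). So the whole corollary should follow once the categorical hypotheses of Theorem \ref{t:whc} are matched with these facts about $\mathcal{M}^C$.

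For part (1), I would additionally invoke the fact already used above, in the corollary on left and right semiperfect coalgebras, that when $C$ is left and right semiperfect the category $\mathcal{M}^C$ admits a family of finitely generated projective generators \cite[Section~3.2]{DNR}. Under this hypothesis $\mathcal{M}^C$ satisfies all the assumptions of Theorem \ref{t:whc}(1), and the equivalence of (i) and (ii) — in both the general and the finitely generated versions — is immediate, since the notions of weakly (semi)hereditary object and of self-CS-Rickart object appearing here are exactly those of Theorem \ref{t:whc}(1) specialized to $\mathcal{M}^C$.

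For part (2) no extra hypothesis on $C$ is needed: $\mathcal{M}^C$ is already a locally finitely generated Grothendieck category, so the assumptions of Theorem \ref{t:whc}(2) hold, and the equivalence of (i) and (ii) follows at once. The only point that requires attention — rather than being a genuine obstacle — is keeping track of which structural input is needed where: the finitely generated projective generators come from left-and-right semiperfectness and are used for the weakly (semi)hereditary side in part (1), whereas local finiteness alone suffices on the injective side in part (2), in particular to supply the injective cogenerator. Thus the argument is essentially bookkeeping built on top of Theorem \ref{t:whc} together with the comodule structure theory of \cite{DNR}.
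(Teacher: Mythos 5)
Your argument is correct and coincides with the paper's own proof: both parts are obtained by specializing Theorem \ref{t:whc} to the locally finitely generated Grothendieck category of right $C$-comodules, using that left and right semiperfectness of $C$ supplies the family of finitely generated projective generators needed for part (1), while part (2) needs no extra hypothesis. Your extra remarks on the injective cogenerator are just the bookkeeping already contained in the proof of Theorem \ref{t:whc}(2).
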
  
 
\begin{proof} The locally finitely generated Grothendieck category of right $C$-comodules 
has a family of finitely generated projective generators if $C$ is left and right semiperfect \cite[Section~3.2]{DNR}. 
Then use Theorem \ref{t:whc}. 
\end{proof}


\begin{thebibliography}{99}

\bibitem{AN1} A.N. Abyzov and T.H.N. Nhan, \emph{CS-Rickart modules}, Lobachevskii J. Math. {\bf 35} (2014), 317--326. 

\bibitem{ANQ} A.N. Abyzov, T.H.N. Nhan and T.C. Quynh, \emph{Modules close to SSP- and SIP-modules},
Lobachevskii J. Math. {\bf 38} (2017), 16--23. 

\bibitem{CLVW} J. Clark, C. Lomp, N. Vanaja and R. Wisbauer, Lifting modules, Frontiers in Mathematics,
Birkh\"auser, 2006.

\bibitem{CK18} S. Crivei and D. Keskin T\"ut\"unc\"u, \emph{Weak Rickart and dual weak Rickart objects in abelian categories}, 
Comm. Algebra {\bf 46} (2018), 2912--2926.

\bibitem{CK} S. Crivei and A. K\"or, \emph{Rickart and dual Rickart objects in abelian categories}, Appl. Categor.
Struct. {\bf 24} (2016), 797--824. 

\bibitem{CO} S. Crivei and G. Olteanu, \emph{Rickart and dual Rickart objects in abelian categories: Transfer via functors}, 
Appl. Categor. Struct. {\bf 26} (2018), 681--698.

\bibitem{CR2} S. Crivei and S.M. Radu, \emph{Transfer of CS-Rickart and dual CS-Rickart properties via functors between abelian categories}, Quaest. Math. (2021), accepted for publication. DOI: 10.2989/16073606.2021.1925990. arXiv:2104.03826

\bibitem{CR3} S. Crivei and S.M. Radu, \emph{Strongly CS-Rickart and dual strongly CS-Rickart objects in abelian categories}, Comm. Algebra (2021), accepted for publication. DOI:10.1080/00927872.2021.1976201. arXiv:2104.03832

\bibitem{CS} J. Cuadra and D. Simson, \emph{Flat comodules and perfect coalgebras}, Comm. Algebra {\bf 35} (2007), 3164--3194.

\bibitem{DNR} S. D\u asc\u alescu, C. N\u ast\u asescu and \c S. Raianu, Hopf Algebras. An Introduction, Marcel Dekker,
New York, 2001.

\bibitem{DNTD} S. D\u asc\u alescu, C. N\u ast\u asescu, A. Tudorache and L. D\u au\c s, \emph{Relative regular objects
in categories}, Appl. Categor. Struct. {\bf 14} (2006), 567--577.

\bibitem{DHSW} N.V. Dung, D.V.  Huynh, P.F. Smith and R. Wisbauer, Extending modules, Pitman Research Notes,
{\bf 313}, Longman Scientific and Technical, 1994.

\bibitem{Field} D.J. Fieldhouse, \emph{Regular rings and modules}, J. Aust. Math. Soc. {\bf 13} (1972), 477--491.

\bibitem{HV} A. Haghany and K. Varadarajan, \emph{A study of formal triangular matrix rings}, Comm. Algebra {\bf 27} (1999),  5507-5525.

\bibitem{K} I. Kaplansky, Rings of operators, W.A. Benjamin, Inc., New York, Amsterdam, 1968. 

\bibitem{Karab} F. Karabacak, \emph{On generalizations of extending modules}, Kyungpook Math. J. {\bf 49} (2009), 557--562. 

\bibitem{KT} D. Keskin T\"ut\"unc\"u and R. Tribak, \emph{On dual Baer modules}, Glasgow Math. J. {\bf 52} (2010),
261--269.

\bibitem{LRR10} G. Lee, S.T. Rizvi and C. Roman, \emph{Rickart modules}, Comm. Algebra {\bf 38} (2010), 4005--4027.

\bibitem{LRR11} G. Lee, S.T. Rizvi and C. Roman, \emph{Dual Rickart modules}, Comm. Algebra {\bf 39} (2011), 4036--4058.

\bibitem{Maeda} S. Maeda, \emph{On a ring whose principal right ideals generated by idempotents form a lattice}, J. Sci.
Hiroshima Univ. Ser. A {\bf 24} (1960), 509--525.

\bibitem{Oshiro} K. Oshiro, \emph{Lifting modules, extending modules and their applications to QF-rings},
Hokkaido Math. J. {\bf 13} (1984), 310--338.

\bibitem{Prest} M. Prest, Purity, spectra and localisation, Cambridge University Press, Cambridge, 2009.

\bibitem{RR04} S.T. Rizvi and C. Roman, \emph{Baer and quasi-Baer modules}, Comm. Algebra {\bf 32} (2004), 103--123.

\bibitem{RR09} S.T. Rizvi and C. Roman, \emph{On direct sums of Baer modules}, J. Algebra {\bf 321} (2009), 682--696.

\bibitem{St} B. Stenstr\"om, Rings of quotients, Springer, Berlin, Heidelberg, New York, 1975.

\bibitem{TV} Y. Talebi and N. Vanaja, \emph{The torsion theory cogenerated by $M$-small modules}, 
Comm. Algebra {\bf 30} (2002), 1449--1460.

\bibitem{Tribak} R. Tribak, {\it Dual CS-Rickart modules over Dedekind domains}, Algebr. Represent. Theory {\bf 23} (2020), 229--250. 

\bibitem{vN} J. von Neumann, \emph{On regular rings}, Proc. Natl. Acad. Sci. USA {\bf 22} (1936), 707--712.

\bibitem{Wis} R. Wisbauer, Foundations of module and ring theory, Gordon and Breach, Reading, 1991.

\bibitem{Zel} J. Zelmanowitz, \emph{Regular modules}, Trans. Amer. Math. Soc. {\bf 163} (1972), 341--355.


\end{thebibliography}
\end{document}